\newtheorem{theorem}{Theorem}
\newtheorem{definition}[theorem]{Definition}
\newtheorem{example}[theorem]{Example}
\newtheorem{lemma}[theorem]{Lemma}
\newtheorem{proposition}[theorem]{Proposition}
\newtheorem{remark}[theorem]{Remark}
\newenvironment{proof}[1][Proof]{\textbf{#1.} }{\ \rule{0.5em}{0.5em}}
\newcommand{\catname}[1]{\mathcal{#1}}
\newcommand{\catA}{\catname{A}}
\newcommand{\catB}{\catname{B}}
\newcommand{\catC}{\catname{C}}
\newcommand{\catE}{\catname{E}}
\newcommand{\catF}{\catname{F}}
\newcommand{\baseS}{\catname{S}}             
\newcommand{\thT}{\mathbb{T}}                
\newcommand{\thU}{\mathbb{U}}                
\newcommand{\thH}{\mathbb{H}}                
\newcommand{\thone}{\mathrm{1\!\!1}}         
\newcommand{\thob}{\mathbb{O}}               
\newcommand{\thext}{\subset}                 
\newcommand{\theqext}{\Subset}               
\newcommand{\thmorph}{\lessdot}              
\newcommand{\thextmap}{\supset}              
\newcommand{\thmorphmap}{\gtrdot}            
\newcommand{\upstairs}[1]{\overline{#1}}    
\newcommand{\downstairs}[1]{\underline{#1}} 
\newcommand{\stairs}[1]{{#1}\mathord{\Downarrow}}
\newcommand{\Mod}[2]{{#2\text{-}\mathbf{Mod}\text{-}#1}}  
\newcommand{\Mods}[2]{{#2\text{-}\mathbf{Mod}_{s}\text{-}#1}}  
\newcommand{\Modnocat}[1]{{\mathbf{Mod}\text{-}#1}}  
\newcommand{\Modsnocat}[1]{{\mathbf{Mod}_{s}\text{-}#1}}  
\newcommand{\cod}{\mathop{\mathsf{cod}}}
\newcommand{\Id}{\mathop{\mathsf{Id}}}
\newcommand{\comp}{\mathsf{comp}}            
\newcommand{\id}{\mathsf{id}}                
\newcommand{\Sh}[1]{\mathcal{S}{#1}}        
\newcommand{\Set}{\mathbf{Set}}             
\newcommand{\Fr}{\mathbf{Fr}}               
\newcommand{\AU}{\mathbf{AU}}               
\newcommand{\AS}{\mathbf{AS}}               
\newcommand{\AUpres}[1]{\AU\langle #1 \rangle} 
\newcommand{\Con}{\mathfrak{Con}}           
\newcommand{\BCon}{\mathfrak{BCon}}         
\newcommand{\GRD}{\mathbf{GRD}}             
\newcommand{\DLS}{\mathbf{DLS}}             
\newcommand{\pt}{\mathrm{pt}}               
\newcommand{\Loc}{\mathbf{Loc}}             
\newcommand{\Top}{\mathfrak{Top}}             
\newcommand{\Topiso}{\Top_{\cong}}          
\newcommand{\Cat}{\mathbf{Cat}}             
\newcommand{\RCat}{\mathbf{RCat}}           
\newcommand{\BTop}{\mathfrak{BTop}}         
\newcommand{\GTop}{\mathfrak{GTop}}         
\newcommand{\GTopU}[1]{\GTop\text{-}#1}     
\newcommand{\GTopB}[1]{\GTop\text{-}(#1\thext #1)}
\newcommand{\GTopE}[1]{\GTop\text{-}#1}     
\newcommand{\TopB}[1]{\Top_{\cong}\text{-}#1} 
\newcommand{\power}{\mathcal{P}}            
\newcommand{\fin}{\mathcal{F}}              
\newcommand{\List}{\mathop{\mathsf{List}}}  
\newcommand{\funK}{\mathfrak{K}}            
\begin{document}

\title{Arithmetic universes and classifying toposes}
\author{Steven Vickers\\
School of Computer Science, University of Birmingham,\\
Birmingham, B15 2TT, UK\\
\texttt{s.j.vickers@cs.bham.ac.uk}}

\maketitle
\begin{abstract}
  Reasoning in the 2-category $\Con$ of \emph{contexts,}
  certain sketches for arithmetic universes (i.e. list arithmetic pretoposes; AUs),
  is shown to give rise to base-independent results of Grothendieck toposes,
  provided the base elementary topos has a natural numbers object.

  Categories of strict models of contexts $\thT$ in AUs are acted on strictly
  on the left by non-strict AU-functors and strictly on the right by context maps,
  and the actions combine in a strict action of a Gray tensor product.

  Any context extension $\thT_0 \thext \thT_1$ gives rise to a \emph{bundle}.
  For each point of $\thT_0$
  -- a model $M$ of $\thT_0$ in an elementary topos $\baseS$ with nno --
  its fibre is a generalized space,
  the classifying topos $\baseS[\thT_1/M]$ for the geometric theory
  $\thT_1/M$ of $\thT_1$-models restricting to $M$.
  This construction is ``geometric'' in the sense that for any geometric morphism
  $f\colon \baseS' \to \baseS$,
  the classifier $\baseS'[\thT_1/f^\ast M]$ is got by pseudopullback of $\baseS[\thT_1/M]$
  along $f$.

  This is treated in a fibrational way by considering a 2-category $\GTop$ of
  Grothendieck toposes (bounded geometric morphisms)
  fibred (as bicategory) over a 2-category $\Top_{\cong}$ of elementary toposes with nno,
  geometric morphisms, and natural isomorphisms.
  The notion of classifying topos as representing object for a split fibration
  is then fibred over variable base using fibrations ``locally representable''
  over a second fibration.

  Maths Subject Classification
    18B25; 
    18D05  
    18D30  
    18C30  
    03G30  

  Keywords: geometric theory, 2-fibration, sketch, Gray tensor

\end{abstract}

\section{Introduction}\label{sec:Intro}

Grothendieck tells us that a topos is a generalized topological space,
and one aspect of this is that for any geometric theory $\thT$,
the classifying topos $\baseS[\thT]$ serves as the ``space of models of $\thT$''.
This is all understood point-free,
but in a very grand way --
the topos has the \emph{sheaves} on the space (the continuous set-valued maps)
instead of the opens (the continuous Sierpinski-valued maps).
The generalization then is that toposes encompass spaces such as that of sets
(the object classifier) for which there are not enough opens.

For example, \cite{TopCat} followed this methodology by using toposes as generalized spaces
of some domains used in the denotational semantics of programming languages.
However, that paper also mentioned a concern over the role of the base elementary topos $\baseS$.
It both supplies the infinities available in the geometric theory $\thT$
and underlies the categorical construction of $\baseS[\thT]$.
In \cite{TopCat} the geometric theories used would be expressible in any $\baseS$,
as long as it had a natural numbers object (nno).

A similar example is the geometric theory of Dedekind sections of the rationals,
whose classifying topos is the category of sheaves over the point-free real line:
it just requires an nno in $\baseS$.
In fact it is very reasonable to assume an nno under this methodology,
since \cite[Theorem~B4.2.11]{Elephant1} it is necessary and sufficient for the existence of
an object classifier $\baseS[\thob]$ and thence all classifying toposes.

We make that a standing assumption for the present paper:
for us, \emph{every elementary topos $\baseS$ has nno.}

Then the generalized spaces, the Grothendieck toposes,
are relative to an understood base topos $\baseS$:
they are the bounded geometric morphisms with codomain $\baseS$.

\cite{TopCat} made an effort to reason in a constructive way that would allow variation of base $\baseS$,
but that raises the question of why $\baseS$ should be needed in the first place.
The paper proposed that much, perhaps (with care) all,
of the reasoning was valid for arithmetic universes (list arithmetic pretoposes)
and could then be transferred to the toposes using the fact~\cite{JoWraith:AlgThTop}
that every elementary topos with nno is an AU,
and, for every geometric morphism $f$ between them,
the inverse image functor $f^\ast$ is a (non-strict) AU-functor.%
\footnote{
  Actually, the paper recognized specific places where non-geometric reasoning,
  such as the use of exponentials,
  was used to prove geometric sequents.
  Transferring such arguments to AUs would be non-trivial,
  but~\cite{ArithInd} develops techniques for doing so in some situations.
}
The geometric reasoning has extrinsic infinities (specifically: for infinite disjunctions)
supplied by the base $\baseS$.
Instead, AUs would have intrinsic infinities supplied in a type-theoretic way,
with sorts such as the natural numbers,
and could then use existential quantification over those.

A typical example from~\cite{TopCat} is the following.
The paper describes a geometric theory $\mathrm{IS}$ whose models,
``information systems'', are the compact bases of strongly algebraic domains.
Its classifying topos $\baseS[\mathrm{IS}]$, is then treated as the space of information systems.
By taking the topos of sheaves for the ideal completion of the generic information system,
we get a localic geometric morphism $\baseS[\mathrm{IS}][\mathrm{idl}]\to\baseS[\mathrm{IS}]$,
and this can be thought of as the generic strongly algebraic domain.
(A simpler and more familiar example would be the generic local homeomorphism,
got by taking the object classifier $\baseS[\thob]$ and slicing out the generic object.
The slicing has the effect of adjoining a generic global element to the generic object.)
Then some constructions of domain theory were explained in terms of the toposes.
For example, solving domain equations $D\cong F(D)$ requires continuity properties on $F$,
and the paper shows how to use the assumption that $F$ is a geometric endomorphism
on $\baseS[\mathrm{IS}]$.

So, if the classifying topos $\baseS[\mathrm{IS}]$ is the space of information systems, what is $\baseS$?
The traditional choice is the category $\Set$ of classical sets,
and the power of classical reasoning that it provides is still apparently needed for some significant
calculations in topos theory.
However, for present purposes,
where we are interested in exploiting the power of non-classical geometric reasoning,
it looks a distasteful choice.

The ``arithmetic'' proposal is to replace the classifying \emph{topos} by a classifying \emph{AU},
$\AUpres{\mathrm{IS}}$.
For the purposes of categorical logic they are completely analogous.
Each is the ``mathematics generated by a generic information system'',
but the first is geometric mathematics (finite limits, $\baseS$-indexed colimits)
while the second is arithmetic mathematics (finite limits, finite colimits, list objects).
Moreover, whereas the classifying AU is constructed up to isomorphism by universal algebra,
the classifying topos is somewhat hand-crafted, up to equivalence,
using presheaves (to get $\baseS$-indexed colimits) and sheaves.
The paper~\cite{Vickers:AUSk} now provides a 2-category $\Con$ that can be thought of as a 2-category
of generalized spaces in this arithmetic sense.
Its objects, \emph{contexts,} are the arithmetic theories $\thT$,
and the assignment $\thT\mapsto\AUpres{\thT}$ is full and faithful (and contravariant on 1-cells).

Hence the arithmetic proposal is to work in $\Con$.
The aim of the present paper is to begin to show how results there can be translated into base-independent
results for classifying toposes,
along the lines of~\cite{TopCat}.

Another potential benefit of the arithmetic approach is that it gives better control
of \emph{strictness,}
at least when we restrict to the \emph{contexts} of~\cite{Vickers:AUSk}.
Section~\ref{sec:IndCatMod} here will show how to reconcile the strict AU-functors implicit in $\Con$
with the non-strict ones needed for change of semantic domain.
This will enable us in much of our working to gain the advantages of split fibrations.

After that we move on to examining classifying toposes,
our basic approach being to fibre the constructions over the category $\Top_{\cong}$ of base toposes
(elementary toposes with nno, geometric morphisms, and natural isomorphisms)
and thereby gain the base-independence.
A central construction is a 2-category $\GTop$ of Grothendieck toposes,
fibred over $\Top_{\cong}$,
with reindexing by pseudopullback.
According to~\cite[B4.2]{Elephant1},
a theory over $\baseS$ is an indexed category over the fibre $\BTop/\baseS$ of $\GTop$ over $\baseS$ --
we shall call these ``elephant theories''.
Then a classifying topos is a representing object.
For a context $\thT$ we find a corresponding indexed category over $\GTop$,
and show that it is ``locally representable'' (Definition~\ref{def:locRepr})
in the sense that the classifying toposes for different bases $\baseS$ transform by pseudopullback.
The fact (Theorem~\ref{thm:pspbClassifiers}) that they do transform this way may be new.

In Section~\ref{sec:TwoFib} we collect miscellaneous remarks on the 2-fibrational background.

In Section~\ref{sec:ExtSp} we examine classifying toposes for contexts.
In fact, we deal with a relativized version,
with a context extension map $U\colon \thT_1 \to \thT_0$ (given by $\thT_0 \thext \thT_1$).
If each context represents ``the space of its models'',
then we wish to view $U$ as a bundle:
over each model $M$ of $\thT_0$, the fibre over it is the ``space of models of $\thT_1$ that restrict to $M$''.
We shall show how these fibres can be represented as classifying toposes.

Now we fibre over pairs $(\baseS,M)$, where $M$ is a strict model of $\thT_0$ in $\baseS$.
We find a geometric (though not arithmetic in general) theory $\thT_1/M$ of models of $\thT_1$ restricting to $M$,
and it has a classifying topos $\baseS[\thT_1/M] \to \baseS$ (with its generic model).

One example is that mentioned earlier,
where $\thT_0$ is the context $\mathrm{IS}$ for strongly algebraic information systems,
and $\thT_1$ extends it with an ideal.
Then $\baseS[\thT_1/M]$ is the topos of sheaves for the ideal completion of $M$.
More generally, suppose $\thT_0$ is a context for the ``GRD-systems'' used in~\cite{PPExp} to present frames,
and $\thT_1$ extends it with a point of the corresponding locale.
Then, given a GRD-system $M$ in $\baseS$,
$\baseS[\thT_1/M]$ is the topos of sheaves for that locale.
We shall see that our ``local representability'' condition implies the
``geometricity of presentations'' of~\cite{PPExp}.

\subsection{Sketches for arithmetic universes}\label{sec:AUSk}
We summarize the sketch approach to arithmetic universes as set out in \cite{Vickers:AUSk}.
The sketches are roughly as in \cite{BarrWells:TTT:TAC},
with a reflexive graph of nodes and edges for objects and morphisms,
a set of ``commutativities'' to specify commutative triangles,
and ``universals'' (the cones and cocones) for finite limits and finite colimits
-- specifically: terminals, pullbacks, initials, pushouts.
In addition they have universals to specify list objects,
thus gaining an nno as $\List 1$.

In our \emph{sketch extensions} $\thT\thext\thT'$ such universals may be introduced
only for fresh objects, and hence in a definitional way.
A \emph{context} is then an extension of the empty sketch $\thone$.

In \emph{equivalence extensions} $\thT\theqext\thT'$,
everything fresh that is introduced must have been implicitly present already.
This includes composites of composable pairs of edges;
commutativities deducible from existing ones (e.g. by unit laws or associativities);
universals, fillins for universals and uniqueness of fillins;
and inverses for certain edges that must be isomorphisms because of the categorical properties of AUs
such as balance, stability and exactness.

\emph{Homomorphisms} $\thT\thmorph\thT'$ are structure-preserving homomorphisms
for the algebraic theory of sketches.
They translate nodes to nodes, edges to edges, commutativities to commutativities
and universals to universals.
The two kinds of extensions are special cases of this.

Next, we have a notion of \emph{object equalities} between nodes,
certain edges that include all identity edges
but can also arise as fillins when the same universal construction is applied to equal data.
We extend this to object equalities between edges,
when their domains have an object equality and so do the codomains,
and there are appropriate commutativities to make a commutative square;
and then we extend to object equalities between homomorphisms,
using object equalities between corresponding nodes and edges in the image.

Putting these together we get a category $\Con$ whose objects are contexts.
Its morphisms, context \emph{maps}, are the dual of context homomorphisms,
but subject to (i) those for equivalence extensions are invertible,
and (ii) object equalities become identity morphisms between actually equal objects.
Every map $\thT_0 \to \thT_1$ is an equivalence class of opspans of homomorphisms
$\thT_0 \theqext \thT'_0 \thmorphmap \thT_1$.

Notice that, for each of the special symbols $\thext$, $\theqext$ and $\thmorph$,
the narrow end is at the codomain for the corresponding reduction \emph{map}.

For each context $\thT$ there is also a context $\thT^{\to}$ for which a model is a pair of models of $\thT$,
together with a $\thT$-homomorphism between them.
These enable us to define 2-cells between maps, using maps $\thT_0 \to \thT_1^{\to}$,
and $\Con$ becomes a 2-category.
It has finite PIE-limits (Product, Inserter, Equifier)
and pullbacks of extension maps (the duals of the homomorphisms corresponding to extensions).

There is a full and faithful 2-functor from $\Con$ to the category $\AU_s$ of AUs and strict AU-functors,
contravariant on 1-cells,
that takes $\thT \mapsto \AUpres{\thT}$.

A central issue for models of sketches is that of \emph{strictness.}
The standard sketch-theoretic notion is non-strict:
for a universal, such as a pullback of some given opspan,
the pullback cone can be interpreted as any pullback of the opspan.
However, we could also seek strict models that use the canonical pullbacks
(in categories where they exist).
Strictness is essential for the universal algebra that generates $\AUpres{\thT}$,
but in general it is inconvenient.
Significant parts of the present paper are concerned with relating the strict and the non-strict.

Contexts are designed to give us good control over strictness, as summarized by the following proposition.

\begin{proposition}\label{prop:extReindex}
  Let $U\colon\thT_1\to\thT_0$ be an extension map in $\Con$,
  that is to say one deriving from an extension $\thT_0\thext\thT_1$.
  Suppose in some AU $\catA$ we have a model $M_1$ of $\thT_1$,
  a strict model $M'_0$ of $\thT_0$,
  and an isomorphism $\phi_0\colon M'_0 \cong M_1 U$.
  \[
    \xymatrix{
      {\thT_1}
        \ar@{->}[d]_{U}
      & {M'_1}
        \ar@{.>}[r]^{\phi_1}_{\cong}
        \ar@{.}[d]
      & {M_1}
        \ar@{.}[d]
      \\
      {\thT_0}
      & {M'_0}
        \ar@{->}[r]^{\phi_0}_{\cong}
      & {M_1 U}
    }
  \]

  Then there is a unique model $M'_1$ of $\thT_1$
  and isomorphism $\phi_1\colon M'_1 \cong M_1$ such that
  \begin{enumerate}
  \item
    $M'_1$ is strict,
  \item
    $M'_1 U = M'_0$,
  \item
    $\phi_1 U = \phi_0$, and
\item
    $\phi_1$ is equality on all the primitive nodes for the extension $\thT_0\thext\thT_1$.
  \end{enumerate}
\end{proposition}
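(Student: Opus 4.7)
The plan is to construct $M'_1$ and $\phi_1$ step by step along the construction of the extension $\thT_0\thext\thT_1$. Recall that such an extension is a finite sequence of primitive steps, each adjoining either (i)~a fresh primitive node, (ii)~a fresh edge between existing nodes, (iii)~a fresh commutativity, or (iv)~a fresh universal (terminal, pullback, initial, pushout, or list object), the last of which introduces a fresh apex/vertex node together with the associated cone/cocone edges. The inductive invariant I would maintain is that at each intermediate stage $\thT_0\thext\thT'\thext\thT_1$ we have a strict model $M'$ of $\thT'$ restricting to $M'_0$ on $\thT_0$, together with an isomorphism $\phi'$ from $M'$ to the restriction of $M_1$ to $\thT'$ that restricts to $\phi_0$ on $\thT_0$ and is the identity on every primitive node introduced so far. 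The base case is $\thT'=\thT_0$, $M'=M'_0$, $\phi'=\phi_0$.

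At each step the choice is forced, which will deliver uniqueness at the same time as existence. For a fresh primitive node $X$, condition~(4) demands $\phi_1(X):=\id$, so $M'_1(X):=M_1(X)$. For a fresh edge $f\colon A\to B$, naturality of $\phi_1$ forces
\[
  M'_1(f):=\phi_1(B)^{-1}\circ M_1(f)\circ\phi_1(A),
\]
which is well-defined because $\phi_1$ has already been assigned as an isomorphism at $A$ and $B$. For a fresh commutativity there is nothing to define, only a verification, which follows by transporting the corresponding equation in $M_1$ across $\phi_1$.

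For case~(iv), strictness requires $M'_1$ to interpret the fresh apex/vertex as the \emph{canonical} such universal in $\catA$ on the diagram already interpreted by $M'_1$, with the canonical cone/cocone edges. Since $M_1$ interprets that apex as some (not necessarily canonical) universal on the corresponding diagram, and $\phi_1$ is already an isomorphism of input diagrams, the universal property of the canonical interpretation yields a unique mediating morphism into $M_1$'s interpretation, while a symmetric argument produces its inverse; this canonical isomorphism is taken to be $\phi_1$ at the new node, and the cone/cocone naturality squares commute by the very defining equations of the mediator. The invariants $M'_1 U=M'_0$ and $\phi_1 U=\phi_0$ are preserved throughout because we begin from $M'_0,\phi_0$ and never revisit nodes or edges of $\thT_0$.

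The main obstacle lies in the bookkeeping for case~(iv): for each flavour of universal one needs a small ``change-of-diagram'' lemma saying that an isomorphism between input diagrams induces an isomorphism between the canonical and the non-canonical universal objects, compatible with all structure edges. For the finite-limit and finite-colimit universals this is entirely routine. For list objects it is slightly more delicate, since the universal property is a recursion rather than a fillin, but the two mediators between $M'_1(LA)$ and $M_1(LA)$ obtained from each other's list structure are seen by uniqueness of recursion to be mutually inverse.
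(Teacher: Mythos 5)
Your proof is correct and follows essentially the same route as the paper's: induction over the simple extension steps, with the primitive node forced by condition (4), the fresh edge forced by naturality of $\phi_1$, and the universal forced by strictness (canonical interpretation) with $\phi_1$ at the new node given by the unique fillin/recursion isomorphism. The extra care you take over the cone/cocone compatibilities and the list-object recursion is a reasonable elaboration of what the paper delegates to the strictness results of its reference.
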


The proof can be deduced from the strictness results in~\cite{Vickers:AUSk}.
In brief, it is reduced by induction to the case of simple extension steps in $\thT_0\thext\thT_1$.
Adjoining a primitive node, $M'_1$ and $\phi_1$ are determined by (4).
Adjoining a primitive edge, $M'_1$ and $\phi_1$ are determined by the need to make $\phi_1$
an isomorphism.
Adjoining a universal, $M'_1$ is determined by (1)
and $\phi_1$ by (3), as the unique fillin consistent with $\phi_0$.

In the case where $\thT_0$ is the empty context $\thone$,
we see the important corollary that for a context $\thT$
\emph{every model is uniquely isomorphic to a unique strict model
with which it agrees on all primitive nodes.}

Thus in topos theory,
where non-strict AU-functors are liable to transform strict models into non-strict ones,
we can regain strictness of models.

\begin{example}
  The Proposition does not hold for arbitrary context maps $H\colon \thT_1 \to \thT_0$.
  Consider the diagonal $\Delta\colon\thob\to\thob^2$ given by the context homomorphism
  that takes both generic nodes in $\thob^2$ to the generic node in $\thob$.
  If $X$ is a model of $\thob$, then $X\Delta = (X,X)$.
  If we can find $X_1 \cong X \cong X_2$ with $X_1 \neq X_2$,
  then $(X_1,X_2) \cong X\Delta$ without itself being a $\Delta$-reduct.
\end{example}

\section{Indexed categories of models}\label{sec:IndCatMod}

In this section we deal with categories of models of AU-contexts.
For each AU $\catA$ and AU-context $\thT$
we have a category $\Mod{\thT}{\catA}$ of models of $\thT$ in $\catA$,
and a full subcategory $\Mods{\thT}{\catA}$ of strict models.

We shall show that $\Mods{\thT}{\catA}$ is acted on strictly (on the right) by $\Con$,
and strictly (on the left) by $\AU$,
the category of AUs and \emph{non-strict} AU-functors.
This strict left action arises because $\thT$, a context,
has the strict model corollary of Proposition~\ref{prop:extReindex}:
applying a non-strict AU-functor gives us a non-strict model,
but we can then replace it by its strict isomorph.%
\footnote{
  In fact, the definitions of extension and context in~\cite{Vickers:AUSk}
  were made in anticipation of these results.
}
The left and right actions commute up to isomorphism,
which we express in Theorem~\ref{thm:GrayAction} as a category strictly indexed over the Gray tensor product.

Note that the context maps, between contexts $\thT$,
correspond to \emph{strict} AU-functors between the classifying AUs $\AUpres{\thT}$.
What we have done, therefore, is in effect to have strict and non-strict AU-functors
acting on the right and left respectively,
with the Gray tensor action representing the interplay between strict and non-strict.

One might wonder whether we could instead have focused on the non-strict models $\Mod{\thT}{\catA}$.
There is an obvious action on the left by $\AU$,
and an action on the right, by model reduction,
by the context maps that correspond to context homomorphisms.
Those left and right actions commute up to equality.
However, the right action does not extend strictly to arbitrary context maps:
this is because the maps for context equivalence extensions,
which are invertible in $\Con$, give only equivalences between model categories, not isomorphisms.
We prefer to work with the strict action on strict models.

In any case, the non-strict models of a context $\thT$ are the strict models of an extension $\thT'$.
For each node $X$ in $\thT$ introduced by a universal,
adjoin another copy $X'$ with edges and commutativities to make $X'\cong X$.

\begin{definition}
  Let $\catA$ be an AU and $\thT$ a context.
  Then $\Mods{\thT}{\catA}$ is the category of strict models of $\thT$ in $\catA$.
\end{definition}

\begin{lemma}
  For each arithmetic universe $\catA$, we can define a 2-functor
  \[
    \Mods{\bullet}{\catA} \colon \Con \to \Cat
  \]
  for which $\Mods{\bullet}{\catA}(\thT) = \Mods{\thT}{\catA}$.
\end{lemma}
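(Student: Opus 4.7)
The plan is to construct the 2-functor in three layers---objects, 1-cells, 2-cells---using the strict-model corollary of Proposition~\ref{prop:extReindex} as the key tool throughout. On objects simply set $\Mods{\bullet}{\catA}(\thT) := \Mods{\thT}{\catA}$.

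For 1-cells, represent a context map $H \colon \thT_0 \to \thT_1$ by an opspan $\thT_0 \theqext \thT'_0 \thmorphmap \thT_1$, consisting of an equivalence extension together with an underlying sketch homomorphism $h \colon \thT_1 \to \thT'_0$. Sketch homomorphisms act on strict models by precomposition (they induce strict AU-functors between the classifying AUs, and strict AU-functors compose strictly), so $h$ sends strict $\thT'_0$-models to strict $\thT_1$-models. For the equivalence extension $\thT_0 \theqext \thT'_0$, given a strict $M \in \Mods{\thT_0}{\catA}$ take any extension to a (possibly non-strict) model of $\thT'_0$---one exists because every piece of data adjoined by an equivalence extension is forced---and then strictify via Proposition~\ref{prop:extReindex} to obtain a unique strict $\tilde M$ agreeing with $M$ on all primitive nodes. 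Set $H_*(M) := \tilde M \circ h$, and define $H_*$ on morphisms of $\Mods{\thT_0}{\catA}$ by the same recipe applied fibrewise to the arrow context. For 2-cells, a 2-cell $\alpha \colon H \Rightarrow K$ is by construction a context map $\phi_\alpha \colon \thT_0 \to \thT_1^{\to}$ recovering $H$ and $K$ via the two projections $\thT_1^{\to} \rightrightarrows \thT_1$; applying the just-defined 1-cell construction to $\phi_\alpha$ and using the canonical identification $\Mods{\thT_1^{\to}}{\catA} \cong (\Mods{\thT_1}{\catA})^{\to}$ produces the natural transformation $H_* \Rightarrow K_*$.

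The main obstacle is to verify well-definedness and \emph{strict} functoriality. For well-definedness, two opspan representatives of the same context map must give the same functor, not merely isomorphic functors: here the uniqueness clause of Proposition~\ref{prop:extReindex} is decisive, because two representatives admit a common refinement by further equivalence extensions and the strict extension through either branch is pinned down by the equality-on-primitive-nodes condition. For strict preservation of composition $(K \circ H)_* = K_* \circ H_*$, choose opspan representatives of $H$ and $K$, combine them into a composite opspan in $\Con$ (using pullback of extensions along homomorphisms, as in~\cite{Vickers:AUSk}), and observe that strictifying step-by-step along each map and strictifying in one pass through the composite coincide by the same uniqueness property. The identity axiom and the 2-functor axioms for 2-cells reduce to the same principle.
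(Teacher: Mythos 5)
Your argument is essentially correct in outline, but it takes a genuinely different and much more laborious route than the paper. The paper's proof is a one-liner: strict models of $\thT$ in $\catA$ are in bijection with strict AU-functors $\AUpres{\thT}\to\catA$, and since $\thT\mapsto\AUpres{\thT}$ is already known (from \cite{Vickers:AUSk}) to be a full and faithful 2-functor $\Con\to\AU_s^{op}$, one simply composes it with the strict hom-2-functor $\AU_s(-,\catA)$; everything in sight is strict, so the composite is a 2-functor and there is nothing left to check. Your proof, by contrast, unpacks the action of that 2-functor at the sketch level, re-deriving by hand the facts (well-definedness on opspan representatives, strict preservation of composition) that the cited result already encapsulates. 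What your approach buys is an explicit, self-contained description of $\Mods{H}{\catA}$; what it costs is having to redo verifications that the paper deliberately outsources.

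Two points deserve flagging. First, your well-definedness argument only treats the part of the equivalence relation on opspans given by common refinement along further equivalence extensions; but context maps are also quotiented by \emph{object equalities} (the paper: ``object equalities become identity morphisms between actually equal objects''), so you must additionally check that object-equal homomorphisms induce \emph{equal} reduction functors on strict models. This is true --- a strict model interprets object equalities as genuine identities, since the canonical universal constructions applied to equal data yield equal results --- but it is a separate clause you omit. Second, your detour for an equivalence extension $\thT_0\theqext\thT'_0$ (choose an arbitrary non-strict extension of $M$, then strictify via Proposition~\ref{prop:extReindex}) is roundabout and tacitly assumes both that equivalence extensions are extensions and that the result is independent of the chosen non-strict extension; it is cleaner to say directly, as the paper does after the lemma, that every ingredient adjoined by an equivalence extension has a unique strict interpretation, giving an isomorphism of categories $\Mods{\thT'_0}{\catA}\cong\Mods{\thT_0}{\catA}$ outright.
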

\begin{proof}
  Since those models are in bijection with strict AU-functors from $\AUpres{\thT}$ to $\catA$,
  and we have a (full and faithful) 2-functor from $\Con$ to $\AU_s^{op}$,
  this extends to a 2-functor $\Mods{\bullet}{\catA}$ as desired.
\end{proof}

If $M$ is a strict model in $\Mods{\thT_0}{\catA}$ and $H\colon \thT_0 \to \thT_1$
is a context map, then we write $MH$ for $\Mods{H}{\catA}(M)$.
If $H$ is the dual of a context homomorphism then $MH$ is got by model reduction.
If $H$ is the inverse of the dual for an equivalence extension $\thT_0 \theqext \thT_1$,
then $MH$ is got by interpreting all the adjoined ingredients of $\thT_1$ in the unique strict way.

Now we fix $\thT$ and let $\catA$ vary.

\begin{definition}
  Let $f\colon \catA_0 \to \catA_1$ be an AU-functor,
  $\thT$ a context and $M$ a model in $\Mods{\thT}{\catA_0}$.
  Then we define $f^{\ast}M = \Mods{\thT}{f}(M)$ as follows.
  We first define $f\cdot M$ as the non-strict model got by applying $f$ to $M$.
  Then $f^{\ast}M$
  is (using Proposition~\ref{prop:extReindex}) the unique strict model of $\thT$ in $\catA_1$,
  isomorphic to $f\cdot M$ and equal to it on the primitive nodes of $\thT$.

  We extend this to 2-cells $\alpha\colon f_0 \to f_1$ by treating them as AU-functors from $\catA_0$
  to the comma AU $\catA_1 \downarrow \catA_1$.
  $\alpha^{\ast}M \colon f_0^{\ast}M \to f_1^{\ast}M$ is then calculated by pasting
  the following diagram.
  \[
    \xymatrix{
      {\catA_1}
      & & {\catA_0}
        \xtwocell[ll]{}_{f_0}^{f_1}{^\alpha}
      & {\AUpres{\thT}}
        \ar@{->}[l]^(0.7){M}
        \ar@{->}@/_2pc/[lll]_{f_0^{\ast}M}^(0.2){\cong}
        \ar@{->}@/^2pc/[lll]^{f_1^{\ast}M}_(0.2){\cong}
    }
  \]
\end{definition}

\begin{proposition}
  For each context $\thT$ we have a 2-functor
  \[
    \Mods{\thT}{\bullet} \colon \AU \to \Cat
  \]
  for which $\Mods{\thT}{\bullet}(\catA) = \Mods{\thT}{\catA}$
  and $\Mods{\thT}{\bullet}(f)(M) = f^{\ast}(M)$.
\end{proposition}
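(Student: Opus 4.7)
The plan is to define the action of $\Mods{\thT}{\bullet}$ on morphisms and 2-cells in $\AU$, then verify strict 2-functoriality by exploiting the uniqueness clause of the strict model corollary of Proposition~\ref{prop:extReindex}.

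For an AU-functor $f\colon\catA_0\to\catA_1$ and strict model $M$, write $\phi^f_M\colon f^{\ast}M\cong f\cdot M$ for the canonical isomorphism from that corollary, which is identity on primitive nodes. The Definition above only specifies $f^{\ast}$ on objects; I would extend it to a homomorphism $h\colon M\to N$ in $\Mods{\thT}{\catA_0}$ by
\[
  f^{\ast}h \;=\; (\phi^f_N)^{-1}\circ(f\cdot h)\circ\phi^f_M,
\]
where $f\cdot h$ is the ordinary image of $h$ under $f$. Functoriality of $f^{\ast}$ (identities and composition) then follows by telescoping the $\phi$'s.

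Next I would check strict preservation of identities and composition in $\AU$. For $\id_{\catA}$, we have $\id\cdot M = M$, already strict, so uniqueness in Proposition~\ref{prop:extReindex} forces $\id^{\ast}M = M$ and $\phi^{\id}_M=\id$, whence $\id^{\ast}$ is the identity functor. For a composable pair $f\colon\catA_0\to\catA_1$, $g\colon\catA_1\to\catA_2$, both $(gf)^{\ast}M$ and $g^{\ast}(f^{\ast}M)$ are strict models of $\thT$ in $\catA_2$, both isomorphic to $(gf)\cdot M = g\cdot(f\cdot M)$ by isomorphisms that are identity on primitive nodes (using that $g$ applied to an identity-on-primitives isomorphism is again identity on primitives). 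Uniqueness then forces $(gf)^{\ast}M = g^{\ast}(f^{\ast}M)$ on the nose, and the same argument applied to a morphism $h$ yields strict equality of the induced functors.

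For a 2-cell $\alpha\colon f_0\to f_1$, the pasting diagram in the Definition produces components $\alpha^{\ast}M\colon f_0^{\ast}M\to f_1^{\ast}M$. I would verify: (i) $\alpha^{\ast}M$ is a homomorphism of strict models (automatic, since its endpoints are strict); (ii) naturality in $M$, by a pasting chase that expands the definitions of $\alpha^{\ast}M$ and $f_i^{\ast}h$ inside the comma AU $\catA_1^{\downarrow}$; and (iii) that $\id_f$ yields $\id_{f^{\ast}}$ and that vertical composition of 2-cells is preserved strictly, each reducing to uniqueness of fillins in an iterated comma AU. The main obstacle will be the strict horizontal compatibility (and whiskering) of 2-cells: for $\alpha\colon f_0\to f_1$ and $\beta\colon g_0\to g_1$ one must exhibit that $(\beta\alpha)^{\ast}$ agrees with the Godement composite of $\beta^{\ast}$ and $\alpha^{\ast}$ on the nose, not merely up to isomorphism. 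The identity-on-primitives rigidity of all the $\phi^f_M$ is essential here: after expanding both candidates as arising from strict models in a suitable iterated comma AU, they share source, target and behaviour on primitive nodes, so the uniqueness clause of Proposition~\ref{prop:extReindex} collapses any residual ambiguity into genuine equality.
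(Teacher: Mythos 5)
Your proposal is correct and follows essentially the same route as the paper: the paper's proof likewise observes that $f_1^{\ast}f_0^{\ast}M$ and $(f_0f_1)^{\ast}M$ are both the unique strict model isomorphic to $f_1\cdot f_0\cdot M$ and agreeing with it on primitive nodes, and then disposes of the 2-cell conditions by pasting diagrams. You merely make explicit some details the paper leaves implicit (the action of $f^{\ast}$ on homomorphisms by conjugation, and the fact that $g$ applied to an identity-on-primitives isomorphism is again identity on primitives), which is a faithful elaboration rather than a different argument.
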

\begin{proof}
  The main point is that it is strictly functorial on 1-cells $f$.
  Suppose we have AU-functors
  \[
    \xymatrix{ {\catA_2} \ar@{<-}[r]^{f_1} & {\catA_1} \ar@{<-}[r]^{f_0} & {\catA_0} }
    \text{.}
  \]
  Then $f_1^{\ast} f_0^{\ast} M$ and $(f_0 f_1)^{\ast}M$ are both
  the unique strict model of $\thT$ in $\catA_2$ that is isomorphic to $f_1 \cdot f_0 \cdot M$
  and agrees with it on all the primitive nodes.

  After this, the rest follows by pasting diagrams.
\end{proof}

The equation $f_1^{\ast} f_0^{\ast} M = (f_0 f_1)^{\ast}M$ will seem notationally perverse
for morphisms in $\AU$, composed diagrammatically,
but it makes more sense for geometric morphisms,
where the AU-functor for $f$ is $f^\ast$.

\begin{definition}
  Suppose we have 1-cells $f\colon \catA_0 \to \catA_1$ in $\AU$ and $H\colon\thT_0\to\thT_1$ in $\Con$.
  Then we define a natural isomorphism $\Sigma_{f,H}$ as follows.
  \begin{equation}\label{eq:pseudonat}
    \xymatrix{
      {\Mods{\thT_0}{\catA_0}}
        \ar@{->}[rr]^{\Mods{H}{\catA_0}}
        \ar@{->}[d]_{\Mods{\thT_0}{f}}
      & & {\Mods{\thT_1}{\catA_0}}
        \ar@{->}[d]^{\Mods{\thT_1}{f}}
      \\
      {\Mods{\thT_0}{\catA_1}}
        \ar@{->}[rr]_{\Mods{H}{\catA_1}}
        \ar@{}[urr]|{\Sigma_{f,H}\Downarrow}
      & & {\Mods{\thT_1}{\catA_1}}
    }
  \end{equation}
  For each $M$ in $\Mods{\thT_0}{\catA_0}$, we define the isomorphism
  $\Sigma_{f,H}(M)\colon f^{\ast}(MH) \cong (f^{\ast}M)H$
  by pasting the following diagram.
  \[
    \xymatrix{
      {\catA_1}
      & {\catA_0}
        \ar@{->}[l]^{f}
      & {\AUpres{\thT_0}}
        \ar@{->}[l]_-{M}
        \ar@{->}@/^2pc/[ll]^{f^{\ast}M}_(0.2){\cong}
      & & {\AUpres{\thT_1}}
        \ar@{->}[ll]^{\AUpres{H}}
        \ar@{->}@/_2pc/[llll]_{f^{\ast}(MH)}^(0.2){\cong}
    }
  \]
  Naturality is clear.
\end{definition}

\begin{theorem}\label{thm:GrayAction}
  The two actions on $\Mods{\bullet}{\bullet}$ by $\AU$ and $\Con$,
  together with the pseudo-naturality isomorphisms $\Sigma_{f,H}$,
  make up a ``cubical functor'' from $\AU\times\Con$ to $\Cat$ in the sense of~\cite{Gurski:Coh3dimCatTh},
  and hence a 2-functor from the Gray tensor product $\AU\otimes\Con$ to $\Cat$.
\end{theorem}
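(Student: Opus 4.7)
The plan is to verify that $(\Mods{\bullet}{\bullet}, \Sigma)$ satisfies the axioms of a cubical functor $\AU \times \Con \to \Cat$ in the sense of~\cite{Gurski:Coh3dimCatTh}, from which the existence of the 2-functor on $\AU \otimes \Con$ follows immediately by the universal property of the Gray tensor product. Gurski's axioms fall into four groups: (i) strict 2-functoriality in each variable separately; (ii) $\Sigma_{\id, H}$ and $\Sigma_{f, \id}$ are identities; (iii) naturality of $\Sigma_{f,H}$ in 2-cells of $\AU$ and of $\Con$; and (iv) a cubical compatibility with horizontal composition in each variable, expressing $\Sigma_{f_0 f_1, H}$ and $\Sigma_{f, H_0 H_1}$ as the expected pastings of constituent $\Sigma$'s.

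Group (i) is exactly the content of the preceding lemma and proposition. Group (ii) is immediate: when $f = \id_{\catA}$, Proposition~\ref{prop:extReindex} forces the canonical isomorphism $f^\ast N \cong f \cdot N$ to be an identity for every strict model $N$, so both 2-isomorphisms appearing in the pasting that defines $\Sigma_{\id, H}(M)$ collapse; the case $H = \id$ is analogous because $\AUpres{\id}$ is the identity AU-functor.

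For groups (iii) and (iv) I would unfold each $\Sigma_{f, H}(M)$ into its defining pasting and observe that each side of the relevant axiom is a pasting of the same underlying 2-cells and 1-cells, merely assembled in a different order; the two sides then agree by associativity and interchange in the pasting calculus. The one substantive input is needed in the $\AU$-composition case of (iv): the canonical isomorphism for $(f_0 f_1)^\ast N$ must factor as the composite $(f_0 f_1)^\ast N = f_1^\ast f_0^\ast N \cong f_1 \cdot f_0^\ast N \cong f_1 \cdot f_0 \cdot N$. This is exactly the strict functoriality of $(-)^\ast$ on composition, proved in the previous proposition. Naturality in 2-cells (group (iii)) is equally formal, since $\alpha^\ast M$ was itself defined by pasting through a comma AU, so the coherence reduces to rearranging that pasting against the pastings that define $\Sigma$.

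The main obstacle is not conceptual but the book-keeping in (iv): one must track two separate layers of ``canonical isomorphism to the non-strict reduct'' and their interaction with the composition of non-strict AU-functors, for both the $\catA$-variable and the $\thT$-variable. Because the strict reduct is pinned down uniquely by Proposition~\ref{prop:extReindex}, no corrective 2-cell is introduced when these canonical isomorphisms compose, so every coherence equation reduces to an equality of two pasting composites of the same 2-cells, and no new categorical argument is required beyond the strictness results already in place.
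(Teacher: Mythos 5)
Your proposal is correct and follows essentially the same route as the paper, whose own proof is just the observation that the three cubical-functor conditions (horizontal and vertical pasting of the squares for composition in $\AU$ and $\Con$, and compatibility with 2-cells) all follow by pasting the isomorphisms from the definition of $f^{\ast}$. Your additional detail --- in particular reducing the composition axiom to the uniqueness clause of Proposition~\ref{prop:extReindex}, which pins down the canonical isomorphism to the non-strict reduct --- is exactly the content the paper leaves implicit.
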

\begin{proof}
  There are three conditions to be checked.
  The first two are that the squares~\eqref{eq:pseudonat} paste together correctly,
  either horizontally or vertically, for composition of 1-cells in either $\Con$ or $\AU$.
  The third is that it pastes correctly with 2-cells in $\Con$ and $\AU$.
  All are clear by pasting the appropriate isomorphisms from the definition of $f^{\ast}$.
\end{proof}

\begin{lemma}\label{lem:fMH}
  \begin{enumerate}
  \item
    If $H$ is an extension map (for $\thT_1 \thext \thT_0$) then
    $(f^\ast M)H = f^\ast(MH)$ for every $f$ and $M$,
    and $\Sigma_{f,H}(M)$ is the identity morphism.
  \item
    If $H$ is an equivalence extension map ($\thT_1 \theqext \thT_0$),
    then $(f^\ast M)H^{-1} = f^\ast(MH^{-1})$.
  \end{enumerate}
\end{lemma}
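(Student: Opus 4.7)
The plan is to reduce both parts to the uniqueness half of the strict-model corollary of Proposition \ref{prop:extReindex}: each model of a context has a unique strict isomorph together with a unique isomorphism to it that is the identity on all primitive nodes.

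For part (1), the canonical isomorphism $\phi_M \colon f^{\ast}M \cong f \cdot M$ in $\Mod{\thT_0}{\catA_1}$ is, by the definition of $f^{\ast}$, the identity on every primitive node of $\thT_0$. Since the extension $\thT_1 \thext \thT_0$ makes the primitive nodes of $\thT_1$ a subset of those of $\thT_0$, restricting along $H$ exhibits $(f^{\ast}M)H$ as a strict model of $\thT_1$ (reducts of strict models are strict), together with an isomorphism $\phi_M H \colon (f^{\ast}M)H \cong (f \cdot M)H = f \cdot (MH)$ which is still the identity on every primitive node of $\thT_1$. Uniqueness in the corollary then forces both $(f^{\ast}M)H = f^{\ast}(MH)$ and $\phi_M H = \phi_{MH}$. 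Since $\Sigma_{f,H}(M)$ is defined in \eqref{eq:pseudonat} by pasting $\phi_{MH}$ with $(\phi_M H)^{-1}$, it collapses to the identity.

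For part (2), equivalence extensions are a special case of extensions, so part (1) applies directly to $H$. The extra ingredient is that an equivalence extension $\thT_1 \theqext \thT_0$ admits a \emph{unique} strict extension of each strict model of $\thT_1$: every fresh ingredient (composites, commutativities, universals with their fillins, forced inverses) is determined by strict interpretation. Both $(f^{\ast}M)H^{-1}$ and $f^{\ast}(MH^{-1})$ are strict models of $\thT_0$. Their reducts under $H$ are, respectively, $f^{\ast}M$ (by 2-functoriality of the $\Con$-action together with $H^{-1}H = \id_{\thT_1}$) and $f^{\ast}(MH^{-1}H) = f^{\ast}M$ (by part (1) and the same functoriality). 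Two strict models of $\thT_0$ with the same reduct along an equivalence extension must coincide, which completes the proof.

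The argument is essentially bookkeeping; the one thing to keep straight is that the corollary produces an isomorphism that is identity on \emph{all} primitive nodes, so that its restriction along an (equivalence) extension remains identity on the smaller primitive set. Once this is accepted, both equalities reduce to uniqueness statements, with no genuine obstacle beyond confirming that 2-functoriality of reindexing delivers the expected bookkeeping $H^{-1} H = \id$.
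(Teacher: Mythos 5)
Your proof is correct and follows essentially the same route as the paper's: part (1) is the uniqueness clause of the strict-isomorph corollary of Proposition~\ref{prop:extReindex}, using that the primitive nodes of $\thT_1$ remain primitive in the extension $\thT_0$, and part (2) reduces to part (1) applied to $MH^{-1}$ together with the invertibility of reduction along an equivalence extension on strict models. You merely spell out the $\Sigma_{f,H}(M)$ claim and the final uniqueness step more explicitly than the paper does.
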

\begin{proof}
  (1)
  $f^\ast (MH)$ is the unique strict model of $\thT_1$ isomorphic to $f\cdot(MH)$
  and equal to it on all the primitive noes of $\thT_1$.

  On the other hand $(f^{\ast}M)H \cong (f\cdot M)H = f\cdot(MH)$
  and they are equal on all the primitive nodes of $\thT_1$
  because they are also primitive in the extension $\thT_0$.

  (2)
  Apply part (1) to $MH^{-1}$.
\end{proof}

\begin{example}
  Equality in Lemma~\ref{lem:fMH} can fail whenever $H$ involves a context homomorphism
  that maps primitive nodes to non-primitives.
  Consider the context $\thT$ with a single node $T$, declared terminal,
  and $H\colon \thT \to \thob$ given by the sketch homomorphism
  that takes the generic node $X$ in $\thob$ to $T$.

  If $M$ is the unique strict model of $\thT$ in $\catA$,
  then $MH$ simply picks out the canonical terminal object,
  and $(f^\ast M)H$ does the same in $\catA'$.
  $f^\ast(MH)$ picks out the image under $f$ of the canonical terminal in $\catA$.
\end{example}

\section{Remarks on 2-fibrations}\label{sec:TwoFib}
In the 2-functor $\Mods{\thT}{\bullet} \colon \AU_s \to \Cat$ we have already seen
a category strictly indexed over the 2-category $\AU^{op}_s$.
As we proceed, however, we shall encounter non-strict indexations, with pseudofunctors,
and for these we shall prefer a fibrational approach.

For the appropriate notion of 2-fibration we shall follow~\cite{Buckley:Fibred2CatBicat},
which defines 2-fibrations between 2-categories and between bicategories.
Note that, although we deal only with 2-categories,
and 2-functors between them,
we shall still need to use the bicategorical notion of fibration
once we go beyond strictly indexed categories.
The essential difference, for a 2-functor $P \colon \catE \to \catB$,
is that the properties characterizing a cartesian 1-cell $f\colon x \to y$ in $\catE$ are weaker.
Given $g\colon z \to y$ and $h\colon Pz \to Px$ with $h (Pf) = Pg$,
we can lift $h$ to $\hat{h} \colon z \to x$ but the corresponding triangle in $\catE$
commutes only up to isomorphism.
\[
  \xymatrix{
    & {z}
      \ar@{.>}[dl]_{\hat{h}}
      \ar@{}[d]|{\cong}
      \ar@{->}[dr]^{g}
    \\
    {x}
      \ar@{->}[rr]_{f}
    & {}
    & {y}
  }
  \quad
  \xymatrix{
    & {Pz}
      \ar@{->}[dl]_{h}
      \ar@{}[d]|{=}
      \ar@{->}[dr]^{Pg}
    \\
    {Px}
      \ar@{->}[rr]_{Pf}
    & {}
    & {Py}
  }
\]

\subsection{The fibred 2-category of Grothendieck toposes}\label{sec:GTop}
By ``Grothendieck topos'', we mean a \emph{bounded} geometric morphism from some elementary topos $\catE$
to some, understood, base elementary topos $\baseS$.%
\footnote{As always for us, our elementary toposes are assumed to have nnos.}
The 2-category of Grothendieck toposes over $\baseS$ is studied in~\cite[B4]{Elephant1}
as $\BTop/\baseS$.

A notable property of $\BTop/\baseS$ is that any geometric theory $\thT$
(geometric, that is, with respect to $\baseS$)
has a classifying topos $\baseS[\thT]$ that behaves in many respect as
``the space of models of $\thT$'';
indeed, the whole of $\BTop/\baseS$ may then be viewed in a (generalized) topological way:
0-cells are spaces, 1-cells (geometric morphisms) are maps,
and 2-cells are specializations.

Our interest in using arithmetic universes is to deal with theories $\thT$
that depend on the base $\baseS$ only to the extent that nnos are required to exist.
Our aim here will be to prove results about Grothendieck toposes
that are fibred over choice of base.

From the point of view of indexed categories, the key result~\cite[B3.3.6]{Elephant1}
is that bounded geometric morphisms can be pseudo-pulled-back along arbitrary geometric morphisms.%
\footnote{Note that, in 2-categorical contexts, \cite{Elephant1} consistently omits ``pseudo-''
-- see B1.1.}
Thus for any geometric morphism $f\colon\baseS_0 \to \baseS_1$ we get a reindexing
$f^\ast \colon \BTop/\baseS_1 \to \BTop/\baseS_0$.
This does not extend to arbitrary natural transformations $\alpha \colon f \to g$ unless
the Grothendieck toposes are restricted to fibrations or opfibrations over $\baseS$,
so instead we restrict the $\alpha$s at the base level to be isomorphisms.

We write $\Top_{\cong}$ for the 2-category of elementary toposes (with nno),
geometric morphisms and natural isomorphisms.

We now express $\baseS\mapsto\BTop/\baseS$ as a fibration.

\begin{definition}\label{def:BTop}
  The data for the 2-category $\GTop$ is defined as follows.

  A 0-cell is a bounded geometric morphism $p \colon \catE \to \baseS$.

  A 1-cell $f=(\upstairs{f},\stairs{f},\downstairs{f})$
  from $\xymatrix{{\catE_0} \ar@{->}[r]^{p_0} & {\baseS_0}}$
  to $\xymatrix{{\catE_1} \ar@{->}[r]^{p_1} & {\baseS_1}}$
  is a square
  \[
    \xymatrix{
      {\catE_0}
        \ar@{->}[r]^{\upstairs{f}}
        \ar@{}[dr]|{\stairs{f}}
        \ar@{->}[d]_{p_0}
      & {\catE_1}
        \ar@{->}[d]^{p_1}
      \\
      {\baseS_0}
        \ar@{->}[r]_{\downstairs{f}}
      & {\baseS_1}
    }
  \]
  in which $\stairs{f}\colon \upstairs{f} p_1 \to p_0 \downstairs{f}$ is an isomorphism.

  Given two such 1-cells, $f$ and $f'$ from $p_0$ to $p_1$,
  a 2-cell $\alpha\colon f\to f'$ is a pair of natural transformations
  $\upstairs{\alpha}\colon \upstairs{f} \to \upstairs{f}'$
  and $\downstairs{\alpha}\colon \downstairs{f} \to \downstairs{f}'$
  \[
    \xymatrix{
      {\catE_0}
        \xtwocell[rr]{}^{\upstairs{f}}_{\upstairs{f}'}{\upstairs{\alpha}}
        \ar@{}[ddrr]^{\stairs{f}}_{\stairs{f'}}
        \ar@{->}[dd]_{p_0}
      & & {\catE_1}
        \ar@{->}[dd]^{p_1}
      \\ \\
      {\baseS_0}
        \xtwocell[rr]{}^{\downstairs{f}}_{\downstairs{f}'}{\downstairs{\alpha}}
      & & {\baseS_1}
    }
  \]
  such that the obvious diagram of 2-cells commutes.
  Moreover, as mentioned earlier, we require $\downstairs{\alpha}$ to be an isomorphism.
\end{definition}

It is clear that $\GTop$ is a 2-category
\begin{proposition}\label{prop:GTopFib}
  There is a 2-functor $\GTop^{co}\to\Top_{\cong}^{co}$ that forgets all but the downstairs part.
  Although it is strict, we consider it as a homomorphsm of \emph{bicategories}
  for the purposes of~\cite[3.1]{Buckley:Fibred2CatBicat}.
  \begin{enumerate}
  \item
    A 1-cell is cartesian iff it is a pseudopullback square in $\Top$.
  \item
    A 2-cell $\alpha$ is cartesian iff $\upstairs{\alpha}$ is an isomorphism.
  \item
   The 2-functor is a fibration of bicategories.
  \end{enumerate}
\end{proposition}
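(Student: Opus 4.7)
The plan is to treat the three clauses in order, leveraging the universal property of pseudopullbacks in $\Top$ together with the fact \cite[B3.3.6]{Elephant1} that bounded geometric morphisms can be pseudo-pulled-back along arbitrary geometric morphisms. Throughout, a 1-cell of $\GTop$ is to be decoded into its upstairs, downstairs and staircase pieces, and one needs only to check that the Buckley conditions unpack into facts about these pieces.

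For (1), I would establish the two directions separately. Suppose first that the square presenting $f\colon p_0\to p_1$ is a pseudopullback in $\Top$. Given a test 1-cell $g\colon p_2\to p_1$ and a 1-cell $h\colon \baseS_2\to\baseS_0$ with an isomorphism $h\,\downstairs{f}\cong\downstairs{g}$, the data $\upstairs{g}\colon\catE_2\to\catE_1$ and $p_2 h\colon\catE_2\to\baseS_0$, together with the composite isomorphism $\upstairs{g}\,p_1\cong p_2\,\downstairs{g}\cong p_2 h\,\downstairs{f}$, form a pseudopullback cone. Its universal property produces $\upstairs{\hat h}\colon\catE_2\to\catE_0$ and canonical isomorphisms $\upstairs{\hat h}\,\upstairs{f}\cong\upstairs{g}$ and $\upstairs{\hat h}\,p_0\cong p_2 h$; the latter is the staircase $\stairs{\hat h}$, and the former, together with $h\,\downstairs{f}\cong\downstairs{g}$, supplies the 2-cell $\hat h\,f\cong g$ in $\GTop$ over $h$. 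The 2-dimensional part of the pseudopullback property gives uniqueness up to unique isomorphism. Conversely, if $f$ is cartesian, I would apply its universal property to an arbitrary pseudopullback cone to exhibit a factorization through $\catE_0$, recovering the pseudopullback universal property of the square.

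For (2), let $\alpha = (\upstairs\alpha,\downstairs\alpha)\colon f\to f'$ lie over $\downstairs\alpha$. If $\upstairs\alpha$ is an isomorphism, then given any 2-cell $\gamma\colon g\to f'$ together with a downstairs factorization $\downstairs\gamma=\downstairs\delta\cdot\downstairs\alpha$, I would define $\hat\delta$ by $\upstairs{\hat\delta}=\upstairs\gamma\cdot\upstairs\alpha^{-1}$ and $\downstairs{\hat\delta}=\downstairs\delta$, check that the compatibility square between staircases commutes (using invertibility of $\upstairs\alpha$ to transport the compatibility for $\gamma$ through that for $\alpha$), and note that uniqueness is forced because $\upstairs\alpha$ is monic. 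Conversely, applying the cartesian property of $\alpha$ to the identity 2-cell $f'\to f'$ factored as itself produces an inverse to $\upstairs\alpha$.

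For (3), the three conditions of a bicategorical fibration unpack as follows. Cartesian lifts of 1-cells are provided by pseudo-pulling back the bounded morphism $p_1$ along any $\downstairs f\colon\baseS_0\to\baseS_1$, which by \cite[B3.3.6]{Elephant1} exists and is bounded, and which by (1) yields a cartesian 1-cell. Cartesian lifts of 2-cells are obtained by taking a given $f$ and a downstairs iso $\downstairs\alpha\colon\downstairs f\to\downstairs{f'}$, setting $\upstairs{f'}=\upstairs f$, defining $\stairs{f'}$ as $\stairs f$ composed with $p_0\downstairs\alpha$, and letting $\alpha=(\id,\downstairs\alpha)$; this is cartesian by (2). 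Closure under composition follows from the standard fact that horizontal pasting of pseudopullback squares is again a pseudopullback.

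The main obstacle will be the bookkeeping in (1): verifying that the universal 1-cell and the two isomorphisms produced by the pseudopullback property in $\Top$ genuinely assemble into a 1-cell and a 2-cell of $\GTop$ (i.e.\ that their staircase/compatibility squares commute), and that the weaker, up-to-isomorphism 2-dimensional universal property of a bicategorical cartesian 1-cell is met rather than the strict one. Everything else, in particular part (3), reduces to (1), (2) and the cited pseudopullback theorem.
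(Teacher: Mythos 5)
Your proposal is correct and follows essentially the same route as the paper: cartesian 1-cells are identified with pseudopullback squares by matching universal properties, cartesian 2-cells are characterized by invertibility of the upstairs component, and the fibration property then follows from the existence of pseudopullbacks of bounded geometric morphisms along arbitrary ones \cite[B3.3.6]{Elephant1}, with cartesian 2-cell lifts taken to have identity upstairs. The one ingredient you leave implicit in the converse of (1) is that an arbitrary pseudopullback test cone with apex $\catF$ must first be realized as a 1-cell of $\GTop$ before the cartesian property can be invoked; the paper does this by regarding $\catF$ as a 0-cell of $\GTop$ via its identity geometric morphism.
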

\begin{proof}
  (1):
  This is essentially the same as the proof of the result for 1-categories,
  that for the codomain fibration $\cod \colon \catC^{\to} \to \catC$,
  a morphism for $\catC^{\to}$ is cartesian iff it is a pullback square in $\catC$.
  The conditions for pseudopullbacks and cartesian 1-cells both bring in the 2-cells in the same way.
  For the ``$\Rightarrow$'' direction, note that an arbitrary topos $\catE$ can be treated
  as a 0-cell in $\GTop$ using the identity geometric morphism.

(2):
  If $\upstairs{\alpha}$ is an isomorphism then so is the 2-cell $\alpha$,
  and it is then clearly cartesian.
  For the converse,
  suppose $\alpha \colon f \to g$ is a cartesian 2-cell.
  (Note that because we are going to dualize, $\alpha$ is really cocartesian in $\GTop$.)
  Downstairs, $\downstairs{\alpha}$ is invertible
  and so by lifting $\downstairs{\alpha}^{-1}$
  we get $\alpha' \colon g \to f$, with $\alpha \alpha' = \Id_f$.
  By considering $\Id_g$ and $\alpha' \alpha$ as lifts of $\Id_{\downstairs{g}}$
  we see that they are equal.

  (3)
  Cartesian lifting of 1-cells arises because, in $\Top$,
  pseudopullbacks of bounded geometric morphisms along arbitrary geometric morphisms
  always exist~\cite[B3.3.6]{Elephant1}.

  Cartesian lifting of 2-cells is easy --
  in fact we can ensure that the upstairs part of the lifted 2-cell is an identity.
\end{proof}

Of course, $\Top^{co}_{\cong} \cong \Top_{\cong}$,
so we could equally well consider $\GTop^{co}$ as fibred over $\Top_{\cong}$.

\subsection{Elephant theories}\label{sec:ElephTh}
Here we briefly summarize the account in~\cite[B4.2]{Elephant1} of classifying toposes,
over a fixed base topos $\baseS$.

Central to its treatment is the 2-category $\BTop/\baseS$,
which is just the fibre of our $\GTop$ over $\baseS$.
A 0-cell is a bounded geometric morphism $p\colon\catE\to\baseS$,
but we shall frequently suppress $p$ notationally.
Thus we write about it as a topos $\catE$ \emph{equipped with} $p$.
Similarly, a 1-cell is a geometric morphism equipped with a specified isomorphism in the
triangle over $\baseS$.

One is used to thinking of the concept of ``logical theory'' in syntactic terms,
but in~\cite[B4.2]{Elephant1} it is defined semantically in a very grand way:
as the category of all models.
Moreover, geometric theories are incomplete in general,
and for that reason it is not enough simply to specify all the models in $\baseS$ --
there may not be enough of them.
Instead one must look at all the models in all bounded $\baseS$-toposes.
I shall refer to these as ``elephant theories'',
partly to acknowledge their use in \cite{Elephant1},
but also to convey something of the sheer quantity of data needed to describe
one of these theories in complete detail.
Obviously in practice we try to use syntactic presentations,
and that is one of the aims of the AU methods.

\begin{definition}
  An \emph{elephant theory over $\baseS$} is an indexed category $\thT$ over $\GTop/\baseS$.
  Then an object of $\thT(\catE)$ is a ``model of $\thT$ in $\catE$''.
\end{definition}

In our applications, the elephant theories will be strict, 2-functors to $\Cat$.
For each AU-context $\thT$ we have a 2-category $\Mods{\thT}{\bullet}$, strictly indexed over $\Top$,
and it restricts to $\BTop/\baseS$,
with the geometric morphisms $p$ playing no role in the reindexing.
Also, each context map $H\colon \thT_0 \to \thT_1$
gives a corresponding indexed functor from $\thT_0$ to $\thT_1$ as elephant theories.

A particularly important example is the context $\thob$,
the object classifier, with $\thob(\catE) = \catE$.

Given an elephant theory $\thT$ over $\baseS$,
a \emph{geometric construct} on $\thT$ is an indexed functor from $\thT$ to $\thob$.

\begin{definition}
  Let $\thT_0$ be an elephant theory over $\baseS$.
  A \emph{geometric extension} of $\thT_0$ is a theory built, starting from $\thT_0$,
  by a finite sequence of the following ``simple'' steps from $\thT$ to $\thT'$.
  \begin{itemize}
  \item
    \emph{Simple functional extension:}
    Let $H_0,H_1\colon \thT \to \thob$ be two geometric constructs.
    Define the theory $\thT'$ whose models in $\catE$ are pairs $(M,u)$
    where $M$ is a model of $\thT$ in $\catE$ and $u\colon MH_0 \to MH_1$ is a morphism.
    A morphism from $(M,u)$ to $(M',u')$ is morphism $\phi\colon M \to M'$
    such that that following diagram commutes.
    \[
      \xymatrix{
        {MH_0}
          \ar@{->}[r]^{u}
          \ar@{->}[d]_{\phi H_0}
        & {MH_1}
          \ar@{->}[d]^{\phi H_1}
        \\
        {M'H_0}
          \ar@{->}[r]_{u'}
        & {M'H_1}
      }
      \text{.}
    \]
  \item
    \emph{Simple geometric quotient:}
    Let $\phi\colon H_0 \to H_1$ be a morphism of geometric constructs on $\thT$.
    $\thT'$ is the theory whose models in $\catE$ are those models of $\thT$ for which
    $\phi$ is an isomorphism;
    its morphisms are all $\thT$-morphisms.
  \item
    \emph{Simple extension by primitive object:}
    We define $\thT'(\catE) = \thT(\catE)\times \catE$.
    In other words, we may write $\thT'=\thT\times\thob$.
  \end{itemize}
  Then a \emph{geometric theory} over $\baseS$ is a geometric extension of $\thone$.
\end{definition}
Note that~\cite{Elephant1} does not define the general notion of geometric extension,
but simply that of geometric theory as an extension of $\thob^{n}$ (for some finite $n$)
by simple functional extensions and simple geometric quotients.
The two are equivalent,
because no harm is done if the primitive sorts are all adjoined at the start,
and doing this $n$ times to $\thone$ gives $\thob^{n}$.

If $\thT_1$ is a geometric extension of $\thT_0$,
then there is a theory morphism from $\thT_1$ to $\thT_0$ given by model reduction.

For future reference we prove the following result that does not appear to be in~\cite{Elephant1}.
\begin{proposition}\label{prop:elephantpb}
  In the category of elephant theories over $\baseS$ and indexed functors between them,
  geometric extensions can be pulled back along any morphism.
\end{proposition}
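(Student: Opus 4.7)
The plan is induction on the length of the defining sequence of simple steps. Since pullback squares paste, it suffices to prove that each of the three kinds of simple extension is stable under pullback along an arbitrary indexed-functor morphism $K\colon\thT'_0\to\thT_0$ of elephant theories. The base case (length zero, $U$ an identity) is trivial, and the inductive step consists of pulling back one simple step at a time.

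For a simple functional extension where $\thT_1$ augments $\thT_0$ by an edge $u\colon H_0\to H_1$ between geometric constructs $H_i\colon\thT_0\to\thob$, I would take $\thT'_1$ to be the simple functional extension of $\thT'_0$ by the whiskered constructs $H_0 K,H_1 K\colon\thT'_0\to\thob$. A model of $\thT'_1$ in $\catE$ is a pair $(M',u')$ with $u'\colon M'(H_0 K)\to M'(H_1 K)$; since $M'(H_i K)=(KM')H_i$, the assignments $(M',u')\mapsto(KM',u')$ and $(M',u')\mapsto M'$ exhibit $\thT'_1$ as the fibrewise pullback. For a simple geometric quotient defined by the invertibility of $\phi\colon H_0\to H_1$, I would let $\thT'_1$ be the quotient of $\thT'_0$ at the whiskered 2-cell $\phi K$; then $M'\in\thT'_1(\catE)$ iff $(\phi K)(M')=\phi(KM')$ is invertible iff $KM'\in\thT_1(\catE)$, and since both quotient theories are full on morphisms, $\thT'_1$ is readily the pullback. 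For a simple extension by a primitive object, $\thT_1=\thT_0\times\thob$, I would take $\thT'_1=\thT'_0\times\thob$, using $K\times\Id$ as the map down to $\thT_1$; the pullback property is immediate from binary products in $\Cat$ applied fibrewise, since the $\thob$ coordinate is untouched.

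I expect the main obstacle to be coherence bookkeeping rather than anything conceptual: in each case one must check that the constructed $\thT'_1$ really is an indexed category over $\BTop/\baseS$, with reindexing along $\baseS$-morphisms inherited compatibly from $\thT'_0$, $\thT_1$, and $\thob$, and that the two candidate maps out of $\thT'_1$ are genuine indexed functors. This is routine because $K$, the $H_i$, and $\phi$ are all strict indexed data and each construction is uniform in $\catE$, so the inherited reindexings automatically satisfy the needed equations and the universal property holds fibrewise.
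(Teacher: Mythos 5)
Your structural decomposition matches the paper's: induct over the simple steps, and identify the pullback of each simple extension along $K$ as the corresponding simple extension of the domain by whiskered data ($H_0K$ and $H_1K$ for a functional extension, $\phi K$ for a quotient, an untouched $\thob$ factor for a primitive object). The paper does exactly this identification. The gap is in your final paragraph, where you dismiss the reindexing question on the grounds that ``$K$, the $H_i$, and $\phi$ are all strict indexed data.'' That premise is false in the case the proposition is actually needed for: the pullback in Definition~\ref{def:EoverM} is taken along the morphism $M\colon\thone\to\thT_0$ classifying a model, and the paper is explicit that this $M$ is \emph{not} a strict indexed functor --- its naturality squares commute only up to the isomorphisms $\alpha M\colon f^\ast p^\ast M \cong q^\ast M$. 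Consequently $(f^\ast M')K$ and $f^\ast(KM')$ are merely isomorphic, and your claim that ``the inherited reindexings automatically satisfy the needed equations'' breaks down: with the naive reindexing $(M',u')\mapsto(f^\ast M', f^\ast u')$ the projection to $\thT_1$ is not strict and the square is only a pseudopullback, whereas the whole point of the proposition (as the paper's proof opens by saying) is to obtain a genuine pullback.

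The missing idea is the paper's ``trick'': define the reindexing of a pair $(M^0,M^1)$ to be $(f^\ast M^0, N^1)$, where $N^1$ is obtained by transporting $f^\ast M^1$ along the coherence isomorphism so that its $\thT_1$-reduct is \emph{equal} to $(f^\ast M^0)K$. Concretely, in the functional-extension case the morphism component $u'$ must be conjugated by the isomorphisms $(f^\ast M^0)KG_i \cong f^\ast(M^0 K G_i)$ rather than being just $f^\ast u$; in the quotient case nothing needs correcting since invertibility is stable under isomorphism; in the primitive-object case the extra coordinate is independent of $K$. Once you add this correction, your argument goes through and coincides with the paper's.
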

\begin{proof}
  The point is that we have a pullback, not a pseudopullback.

  Let $H\colon \thT_0 \to \thT_1$ be an indexed functor between elephant theories over $\baseS$,
  and let $\thT'_1$ be a geometric extension of $\thT_1$ with indexed functor $U\colon\thT'_1 \to \thT_1$
  defined by model reduction.
  We define the elephant theory $\thT'_0$ by argumentwise pullback of categories.
  \[
    \xymatrix{
      {\thT'_0(\catE)}
        \ar@{->}[r]
        \ar@{->}[d]
      & {\thT'_1(\catE)}
        \ar@{->}[d]^{U(\catE)}
      \\
      {\thT_0(\catE)}
        \ar@{->}[r]_{H(\catE)}
      & {\thT_1(\catE)}
    }
  \]
  Thus a model of $\thT'_0$ is a pair $(M^0,M^1)$ of models of $\thT_0$ and $\thT'_1$
  for which $M^0 H = M^1 U$.

  For reindexing along $f \colon \catF \to \catE$ (over $\baseS$),
  the naive attempt of $f^\ast (M^0,M^1) = (f^\ast M^0, f^\ast M^1)$ fails because we only have
  \[
    (f^\ast M^0)H \cong f^\ast(M^0 H) = f^\ast(M^1 U) = (f^\ast M^1)U
    \text{.}
  \]
  (The last equality can be readily checked for different kinds of simple geometric extension.)
  The trick then is to define $f^\ast (M^0,M^1)$ as $(f^\ast M^0, N^1)$ for some $N^1 \cong f^\ast M^1$
  whose $\thT_1$-reduct is $(f^\ast M^0)H \cong (f^\ast M^1)U$.

  It suffices to check the three kinds of simple geometric extension.
  For extension by primitive sort, $\thT'_1 = \thT_1\times\thob$,
  we find that $\thT'_0$ as defined by pullback is $\thT_0\times\thob$.
  For the reindexing question,
  we have $M^1$ of the form $(M^0 H, X)$ and define $N^1 = ((f^\ast M^0)H, f^\ast X)$.

  The next case is when $\thT'_1$ is a simple functional extension of $\thT_1$
  for two geometric constructs $G_0,G_1 \colon \thT_1 \to \thob$.
  We find that $\thT'_0$, as defined by pullback,
  is a simple functional extension of $\thT_0$ for $HG_0$ and $HG_1$.
  For the reindexing, we have $M^1$ of the form $(M^0 H, u\colon M^0 H G_0 \to M^0 H G_1))$.
  Then we take $N^1$ to be $((f^\ast M^0) H, u')$,
  where $u'$ is so as to make the following diagram commute.
  \[
    \xymatrix{
      {(f^\ast M^0)HG_0}
        \ar@{->}[r]^{\cong}
        \ar@{->}[d]_{u'}
      & {(f^\ast (M^0 H))G_0}
        \ar@{->}[r]^{\cong}
      & {f^\ast (M^0 H G_0)}
        \ar@{->}[d]^{f^\ast u}
      \\
      {(f^\ast M^0)HG_1}
        \ar@{->}[r]^{\cong}
      & {(f^\ast (M^0 H))G_1}
        \ar@{->}[r]^{\cong}
      & {f^\ast (M^0 H G_1)}
    }
  \]

  For the final case, $\thT'_1$ is an extension of $\thT_1$ by simple geometric quotient
  for a morphism $\phi\colon G_0 \to G_1$ of two geometric constructs on $\thT_1$.
  Now $\thT'_0$ is an extension of $\thT_0$ by simple geometric quotient
  for a morphism $H\phi\colon HG_0 \to HG_1$.
\end{proof}

\begin{definition}
Let $\thT$ be an elephant theory over $\baseS$.
A \emph{classifying topos} for $\thT$ is a bounded $\baseS$-topos
$p\colon \baseS[\thT] \to \baseS$,
equipped with a ``generic'' $\thT$-model $G$,
such that, for each bounded $\baseS$-topos $\catE$,
the functor
\[
  \GTop/\baseS[\catE,\baseS[\thT]] \to \thT(\catE)\text{,}
  \quad f \mapsto f^\ast G
  \text{,}
\]
is one half of an equivalence of categories.
\end{definition}

Since all our toposes have nno, \cite[Theorem~B4.2.9]{Elephant1} tells us that
\emph{every geometric theory has a classifying topos.}

\subsection{Representability}\label{sec:Represent}
Classifying topos as above is \emph{defined} in terms of representability of an indexed category.
We now look at how this appears in terms of fibrations.

Suppose $\catC$ is a 2-category, and $F\colon\catC^{coop}\to\Cat$ a pseudofunctor.
Its Grothendieck construction is made, according to~\cite[3.3.3]{Buckley:Fibred2CatBicat},
as follows.
(We are in a somewhat simpler situation.
We have not allowed $\catC$ to be a bicategory, nor have we allowed non-trivial 2-cells in each $F(X)$.
When $F$ is strict -- as in fact it is in our applications --
then we can apply~\cite[2.2]{Buckley:Fibred2CatBicat} to get a fibration of 2-categories.)

The fibred bicategory $\catE$, actually a 2-category, though not fibred as such, has --

\emph{0-cells} are pairs $(x,x_{\_})$ of objects of $\catC$ and $Fx$.

\emph{1-cells} are pairs $(f,f_{\_}) \colon (x,x_{\_}) \to (y,y_{\_})$
where $f\colon x \to y$ and $f_{\_} \colon x_{\_} \to Ff(y_{\_})$.

\emph{2-cells} $(f,f_{\_}) \to (g,g_{\_}) \colon (x,x_{\_}) \to (y,y_{\_})$
are 2-cells $\alpha\colon f \to g$ such that the following diagram commutes.
\[
  \xymatrix{
    {x_{\_}}
      \ar@{->}[rr]^{f_{\_}}
      \ar@{->}[dr]_{g_{\_}}
    & {}
      \ar@{}[d]|{=}
    & {Ff(y_{\_})}
    \\
    & {Fg(y_{\_})}
      \ar@{->}[ur]_{F\alpha_{y_{\_}}}
  }
\]

Then the 1-cell $(f,f_{\_})$ is cartesian iff $f_{\_}$ is an isomorphism,
and the 2-cell $\alpha$ is cartesian iff it is an isomorphism.

In the following proposition we characterize representability of the pseudofunctor $F$
in a purely fibrational way, independent of $F$ as choice of cleavage.

\begin{proposition}\label{prop:psfunctorRepr}
  Let $F\colon\catC^{coop}\to\Cat$ be a pseudofunctor as above,
  and let $P\colon\catE\to\catC$ be its Grothendieck construction.
  Then $F$ is representable iff there is an object $(x,x_{\_})$ in $\catE$
  (a representing object) with the following properties.
  \begin{enumerate}
  \item\label{eq:psfunctorReprCondOne}
    For each $(y,y_{\_})$ in $\catE$,
    there is a cartesian 1-cell $(f,f_{\_}) \colon (y,y_{\_}) \to (x,x_{\_})$.
  \item\label{eq:psfunctorReprCondTwo}
    Each cartesian 1-cell $(f,f_{\_}) \colon (y,y_{\_}) \to (x,x_{\_})$ is terminal
    in $\catE((y,y_{\_}), (x,x_{\_}))$.
  \end{enumerate}
\end{proposition}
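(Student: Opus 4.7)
The plan is to match representability of $F$ --- the requirement that the Yoneda-style functor
\[
  Y_y \colon \catC(y, x) \to Fy, \quad f \mapsto Ff(x_{\_}), \quad \alpha \mapsto F\alpha_{x_{\_}},
\]
be an equivalence for each $y$ --- against the two fibrational conditions by reading (1) as essential surjectivity of $Y_y$ and (2) as its full faithfulness on hom-categories. Pseudonaturality in $y$ should be automatic from the pseudofunctor coherences of $F$, so the content of the proposition is entirely pointwise.

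For the forward direction, I would fix a representing $(x, x_{\_})$ and, for each $(y, y_{\_}) \in \catE$, obtain condition (1) by applying essential surjectivity of $Y_y$ to $y_{\_}$: this yields $f\colon y \to x$ with an isomorphism $f_{\_}\colon y_{\_} \cong Ff(x_{\_})$, which is exactly a cartesian 1-cell $(f, f_{\_})\colon (y, y_{\_}) \to (x, x_{\_})$. For (2), given any cartesian $(f, f_{\_})$ and any 1-cell $(h, h_{\_})$ in the same hom-category, a 2-cell $(h, h_{\_}) \to (f, f_{\_})$ is a 2-cell $\alpha\colon h \to f$ in $\catC$ satisfying $h_{\_} = F\alpha_{x_{\_}} \circ f_{\_}$; invertibility of $f_{\_}$ reduces this to finding a unique $\alpha$ with $F\alpha_{x_{\_}} = h_{\_} \circ f_{\_}^{-1}$, and this is precisely what full faithfulness of $Y_y$ supplies.

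For the converse, I would take $(x, x_{\_})$ satisfying (1) and (2) and verify directly that $Y_y$ is an equivalence. Essential surjectivity is immediate from (1), since a cartesian $(f, f_{\_})\colon (y, y_{\_}) \to (x, x_{\_})$ exhibits $y_{\_} \cong Ff(x_{\_})$. For full faithfulness, given $f, h\colon y \to x$ and any morphism $\beta\colon Ff(x_{\_}) \to Fh(x_{\_})$ in $Fy$, I would single out the canonical cartesian 1-cell $(f, \id_{Ff(x_{\_})})\colon (y, Ff(x_{\_})) \to (x, x_{\_})$ and, in the same hom-category, the competing 1-cell $(h, \beta)$; terminality of $(f, \id)$ provided by (2) then delivers a unique $\alpha\colon h \to f$ with $\beta = F\alpha_{x_{\_}} \circ \id = F\alpha_{x_{\_}}$, which is exactly the bijection on 2-cells required.

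The one point that needs care is that condition (2) asserts terminality of \emph{every} cartesian 1-cell with target $(x, x_{\_})$, whereas for the converse I only invoke terminality of the canonical ones $(f, \id)$; the remaining cases follow because any cartesian lift of $(y, y_{\_})$ differs from $(f, \id)$ only by pre-composition with the isomorphism $f_{\_}$, and terminality in a hom-category is invariant under isomorphism of the terminal object. Beyond this bookkeeping the argument is routine, with pseudonaturality in $y$ arising from the coherence isomorphisms $F(fg)(x_{\_}) \cong Fg(Ff(x_{\_}))$ built into $F$.
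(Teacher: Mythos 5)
Your proof is correct and follows essentially the same route as the paper's: reduce representability to the functor $f\mapsto Ff(x_{\_})$ (which, note, is a functor on $\catC(y,x)^{op}$ because of the \emph{co} in $\catC^{coop}$) being an equivalence for each $y$, identify condition (1) with essential surjectivity, prove condition (2) from full faithfulness for an arbitrary cartesian cell by composing with $f_{\_}^{-1}$, and recover full faithfulness from (2) by comparing $(h,\beta)$ with the canonical cartesian cell $(f,\Id)$ out of $(y,Ff(x_{\_}))$. The closing worry is unnecessary: in the converse direction condition (2) is a hypothesis, so invoking it only for the canonical cartesian 1-cells loses nothing, and the forward direction already establishes terminality for arbitrary cartesian cells.
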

\begin{proof}
  By definition, $F$ is represented by $(x,x_{\_})$ iff for every $y$
  the functor $K_y \colon\catC(y,x)^{op} \to Fy$, given by $f\mapsto Ff(x_{\_})$,
  is an equivalence.

  Condition~\eqref{eq:psfunctorReprCondOne} says that each $K_y$ is essentially surjective.
  It remains to show that, for each $y$, $K_y$ is full and faithful
  iff condition~\eqref{eq:psfunctorReprCondTwo} holds.

  Suppose $K_y$ is full and faithful and, for a given $y_{\_}$,
  we have
  \[
    (f,f_{\_}), (g,g_{\_}) \colon (y,y_{\_}) \to (x,x_{\_})
  \]
  with $(f,f_{\_})$ cartesian,
  i.e. $f_{\_}$ an isomorphism.
  Then there is a unique $\alpha\colon g \to f$ such that $F\alpha_{x_{\_}} = f_{\_}^{-1};g_{\_}$,
  in other words a unique 2-cell from $(g,g_{\_})$ to $(f,f_{\_})$.

  Conversely, suppose condition~\eqref{eq:psfunctorReprCondTwo} holds for a given $y$,
  and suppose we have $f,g \colon y \to x$ and $g_{\_}\colon Ff(x_{\_}) \to Fg(x_{\_})$.
  We then have two 1-cells
  \[
    (f,\Id),(g,g_{\_})\colon (y,Ff(x_{\_})) \to (x,x_{\_})\text{.}
  \]
  Since $(f,\Id)$ is cartesian we get a unique 2-cell $\alpha \colon (g,g_{\_}) \to (f,\Id)$,
  in other words, a unique $\alpha\colon g \to f$ such that $K_y(\alpha) = g_{\-}$.
\end{proof}

By the usual means, one can show that if $x$ is a representing object for $P$,
then for any object $x'$ in $\catE$ we have that $x'$ is a representing object iff
it is equivalent to $x$.

We now extend the above discussion to a situation where $\catC$ too is fibred:
we have fibrations
\[
  \xymatrix{
    {\catE} \ar@{->}[r]^{P} & {\catC} \ar@{->}[r]^{Q} & {\catB}
  }
  \text{.}
\]
In our applications, $P$ will again be got from a pseudofunctor (in fact a 2-functor)
$\catC^{coop} \to \Cat$,
but $Q$ will be more general.
The paradigm example for $Q$ is $\GTop^{co}$ fibred over $\Top^{co}_{\cong}$.

We also assume (as there) that all 2-cells in $\catB$ are isomorphisms.

Now each object $w$ of $\catB$ has a fibre over it, a fibration $P_w\colon\catE_w\to\catC_w$:
it comprises the 0-cells of $\catC$ and $\catE$ that map to $w$, and the 1- and 2-cells
that map to identities at $w$.
We are now interested in the situation where each $P_w$ is representable,
and in how the representing objects transform under 1-cells in $\catB$.

Since we are assuming $P$ arises from a pseudofunctor,
it is easy to see that a 1-cell or 2-cell in $\catE_w$ is cartesian for $P_w$
iff it is cartesian for $P$.

\begin{definition}\label{def:locRepr}
  $P$ is \emph{locally representable} (over $Q$) iff
  \begin{enumerate}
  \item
    Each fibre $P_w$ is representable.
  \item
    \emph{(Geometricity)}
    Suppose $P_w$ is represented by $x_w$, $f\colon w' \to w$ in $\catB$,
    and $h\colon y \to x_w$ is $PQ$-cartesian over $f$.
    Then $y$ is a representing object for $P_{w'}$.
  \end{enumerate}
\end{definition}
We call condition~(2) ``geometricity'' in line with~\cite{PPExp},
because it concerns a property that is preserved by pseudopullback in $\Top$.
Note that it suffices to verify it for \emph{some} $x_w$ and \emph{some} $h$.
This is because representing objects are equivalent,
and so too are cartesian liftings.

\begin{proposition}\label{prop:locRepr}
  $P$ is locally representable over $Q$ iff, for each object $w$ of $\catB$,
  we have an object $x_w$ of $\catE$ over it that satisfies the following conditions.
  \begin{enumerate}
  \item\label{eq:locReprCondOne}
    For every object $y$ of $\catE$, and 1-cell $f\colon Q(Py) \to w$ in $\catB$,
    there is some $\hat{f}\colon y \to x_w$ over $f$ that is cartesian with respect to $P$.
  \item\label{eq:locReprCondTwo}
    Suppose $f\colon y \to x_w$ in $\catE$ is cartesian with respect to $P$.
    If $g \colon y \to x_w$, and $\alpha\colon Q(Pg) \to Q(Pf)$,
    then there is a unique $\hat{\alpha}\colon g \to f$ over $\alpha$.
  \end{enumerate}
\end{proposition}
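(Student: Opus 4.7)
The plan is to prove the two implications of the biconditional separately, in each case using Proposition~\ref{prop:psfunctorRepr} applied to the appropriate sub-fibrations of $P$.

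For the forward direction, suppose $P$ is locally representable, with $x_w$ a representing object of $P_w$ for each $w$. To obtain condition~\eqref{eq:locReprCondOne}, given $y\in\catE$ and $f\colon Q(Py)\to w$ in $\catB$, I would first take a $Q$-cartesian lift $\bar{f}$ of $f$ in $\catC$ ending at $Px_w$, and then a $P$-cartesian lift $\tilde{f}\colon y_0 \to x_w$ of $\bar{f}$ in $\catE$. The composite 1-cell is $PQ$-cartesian over $f$, so by the geometricity clause of Definition~\ref{def:locRepr}, $y_0$ is a representing object for $P_v$ where $v=Q(Py)$. Applying Proposition~\ref{prop:psfunctorRepr}\eqref{eq:psfunctorReprCondOne} to $y_0$ in $P_v$ yields a $P_v$-cartesian (hence $P$-cartesian) 1-cell $k\colon y\to y_0$, and $\hat{f}:=\tilde{f}\circ k$ is then $P$-cartesian over $f$ as required. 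For condition~\eqref{eq:locReprCondTwo}, given a $P$-cartesian $f\colon y\to x_w$ I would factor $f$ as $f\cong h\circ f'$ in $\catE$, where $h\colon y_0\to x_w$ is a $PQ$-cartesian lift of $Q(Pf)$ (so $y_0$ represents $P_v$ by geometricity) and $f'\colon y\to y_0$ is the resulting $P_v$-cartesian 1-cell into the representing object. Then the universal property of $f'$ from Proposition~\ref{prop:psfunctorRepr}\eqref{eq:psfunctorReprCondTwo}, combined with cartesian lifting of 2-cells through the $PQ$-cartesian $h$ (available because all 2-cells in $\catB$ are invertible and $Ph$ is $Q$-cartesian), will yield the unique $\hat{\alpha}$ over any given $\alpha\colon Q(Pg)\to Q(Pf)$.

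For the backward direction, assume~\eqref{eq:locReprCondOne} and~\eqref{eq:locReprCondTwo}. To verify that each $P_w$ is represented by $x_w$, I apply Proposition~\ref{prop:psfunctorRepr}: its condition~\eqref{eq:psfunctorReprCondOne} is exactly~\eqref{eq:locReprCondOne} specialised to $f=\id_w$, since $Q(P\hat{f})=\id_w$ forces $\hat{f}$ into the fibre $\catE_w$; its condition~\eqref{eq:psfunctorReprCondTwo} is~\eqref{eq:locReprCondTwo} specialised to $\alpha$ the identity 2-cell on $\id_w$, which places $\hat{\alpha}$ in $\catE_w$. For geometricity, let $h\colon y\to x_w$ be $PQ$-cartesian over $f\colon w'\to w$; to show $y$ represents $P_{w'}$, I would again verify the two conditions of Proposition~\ref{prop:psfunctorRepr}. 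For existence, given $z\in\catE_{w'}$, apply~\eqref{eq:locReprCondOne} to $z$ and $f$ to get $\hat{f}\colon z\to x_w$; $Q$-cartesianness of $Ph$ supplies a unique $m\colon Pz\to Py$ in $\catC_{w'}$ with $Ph\circ m=P\hat{f}$, and $P$-cartesianness of $h$ lifts $m$ to a $P_{w'}$-cartesian $k\colon z\to y$ with $hk\cong\hat{f}$. For terminality of such $k$ in $\catE_{w'}(z,y)$, given any $g\colon z\to y$ in $\catE_{w'}$, post-compose with $h$ and apply~\eqref{eq:locReprCondTwo} to the $P$-cartesian $hk$ and $hg$ with $\alpha=\id_f$, obtaining a unique $\hat{\alpha}\colon hg\to hk$; then transport $\hat{\alpha}$ back through $h$ using the $Q$-cartesianness of $Ph$ for 2-cells to recover a unique $g\to k$ in $\catE_{w'}$.

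The main obstacle is the bicategorical subtlety present in~\cite{Buckley:Fibred2CatBicat}: cartesian 1-cells in fibrations of bicategories factor through only up to isomorphism (not equality), and the corresponding 2-cell universal property must be applied carefully. In particular, the forward direction of~\eqref{eq:locReprCondTwo} and the terminality step in the geometricity argument both require pulling 2-cells back through a cartesian 1-cell, which invokes the $Q$-cartesian lifting of 2-cells in $\catC$ (guaranteed by invertibility of $\alpha$ in $\catB$) together with the $P$-cartesian universal property applied to the resulting fibre 2-cell. Keeping these compositions and invertibility witnesses coherent is where I expect the bulk of the routine but delicate work to lie.
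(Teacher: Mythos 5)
Your proposal is correct and follows the same overall skeleton as the paper's proof: both directions reduce to Proposition~\ref{prop:psfunctorRepr}, using the factorization of a $PQ$-cartesian 1-cell into a $P$-cartesian 1-cell over a $Q$-cartesian one, the geometricity clause to transfer representing objects along such lifts, and terminality of cartesian 1-cells into representing objects. Two sub-steps are executed differently. For the uniqueness clause~\eqref{eq:locReprCondTwo} in the forward direction, the paper takes $PQ$-cartesian lifts $g_i\colon z_i\to x_w$ of both $Q(Pf)$ and $Q(Pg)$, invokes~\cite[3.1.15]{Buckley:Fibred2CatBicat} to obtain an equivalence $z_0\simeq z_1$ over $\alpha$, and then uses representability of the fibre; you instead factor only $f$ through a single $PQ$-cartesian lift and transport 2-cells across it via the hom-category equivalence that defines cartesianness. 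Your route is arguably cleaner, and --- unlike the paper's proof as written, which treats only the case where both 1-cells are cartesian --- it addresses the statement's arbitrary $g$ directly. For geometricity in the backward direction, the paper exhibits an explicit equivalence $y\simeq x_{w'}$ with the assumed representing object over $w'$ and appeals to invariance of representability under equivalence, whereas you verify the two conditions of Proposition~\ref{prop:psfunctorRepr} for $y$ directly from the hypotheses on $x_w$ alone; this is slightly more economical, at the cost of redoing the terminality check by whiskering with $h$ and using full faithfulness of composition with a cartesian 1-cell on 2-cells over a fixed base 2-cell. Both of your variants are sound; the coherence bookkeeping you flag at the end is exactly the part the paper also leaves implicit.
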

\begin{proof}
  $\Leftarrow$:
  Clearly any $x_w$ satisfying the conditions must be a representing object for $P_w$.
  It remains to show that the representing objects transform
  correctly under base 1-cells $f\colon w' \to w$.

  Suppose $x_w$ and $x_{w'}$ satisfy the conditions.
  By the conditions for $x_w$ we have $P$-cartesian $g\colon x_{w'}\to x_w$ over $f$.
  Suppose also that $h\colon y \to x_w$ is $PQ$-cartesian over $f$.
  By the conditions on $x_{w'}$ we get $P$-cartesian $u\colon y \to x_{w'}$ over $\Id_{w'}$,
  and by cartesianness of $h$ we get $v\colon x_{w'} \to y$ over $\Id_{w'}$
  with an isomorphism $\alpha\colon vh \to g$ over $\Id_{f}$.
  \[
    \xymatrix{
      {x_{w'}}
        \ar@{->}[dr]^{g}
        \ar@{}[dr]_{\alpha\Uparrow}
        \ar@{->}[d]^{v}
      \\
      {y}
        \ar@{->}@/^1pc/[u]^{u}
        \ar@{->}[r]_{h}
      & {x_w}
    }
  \]
  Since both $g$ and $h$ are $P$-cartesian, so is $v$.
  It follows by the conditions on $x_{w'}$ that there is a unique isomorphism
  $vu\cong\Id_{x_{w'}}$ in $P_{w'}$.
  Also, by the $PQ$-cartesian property of $h$,
  there is a unique isomorphism $uv\cong\Id_{y}$ in $P_{w'}$.
  Hence $y$ is equivalent to $x_{w'}$, and so represents $P_{w'}$ as required.

  $\Rightarrow$:
  Let $x_w$ be a representing object for $P_w$.
  We show it has the two properties stated.

  Suppose $y$ is an object in $\catE$, and $f\colon w' = Q(Py) \to w$ a 1-cell in $\catB$.
  Let $g\colon x_{w'} \to x_w$ be $PQ$-cartesian over $f$,
  so that $x_{w'}$ is a representing object for $P_{w'}$.
  Then there is a $P$-cartesian 1-cell $u\colon y\to x_{w'}$ in $P_{w'}$,
  and $ug\colon y \to x_w$ is $P$-cartesian (because $u$ and $g$ are) over $f$.

  Now suppose $h_0, h_1 \colon y \to x_w$ are two $P$-cartesian 1-cells,
  with $f_i = Q(P h_i) \colon w' \to w$, and $\alpha \colon f_0 \to f_1$.
  Recall our assumption that all 2-cells in $\catB$ are isomorphisms.
  Let $g_i \colon z_i \to x_w$ be a $PQ$-cartesian lifting of $f_i$,
  with $u_i \colon y \to z_i$ and $\beta_i\colon u_i g_i \cong h_i$.
  By~\cite[3.1.15]{Buckley:Fibred2CatBicat},
  there is an equivalence $k\colon z_0 \simeq z_1$ with isomorphism $k g_1 \cong g_0$ over $\alpha$,
  and the pair is unique up to unique isomorphism between $k$s in $P_{w'}$.
  Since $z_1$ is a representing object for $P_{w'}$,
  we get a unique 2-cell $u_0 k \to u_1$ in $P_{w'}$,
  and putting these together gives the unique 2-cell $h_0 \to h_1$ over $\alpha$ as required.
\end{proof}

\section{Context extensions as spaces}\label{sec:ExtSp}
In this Section we gather together the previous remarks to get results on classifying toposes
in a form that is fibred over a category of bases.

This is most easily understood in the simple case of a single context $\thT$.
For each Grothendieck topos $p \colon \catE \to \baseS$ we have a category $\Mods{\thT}{\catE}$
of models of $\thT$ in $\catE$.
This extends to a 2-functor from $\GTop^{op} = (\GTop^{co})^{coop}$ to $\Cat$,
and its Grothendieck construction can be written as $P \colon (\GTopE{\thT})^{co} \to \GTop^{co}$.

In constructing that fibration we ignored the parts $\xymatrix{ {} \ar@{->}[r]^{p} & {\baseS}}$,
but when we bring in $\baseS$ we find that the classifying topos $\baseS[\thT]$ provides a representing
object for $P_{\baseS}$.

The main novelty here is that those representing objects transform according to Definition~\ref{def:locRepr}:
that the pseudopullback along any $f\colon\baseS_0\to\baseS_1$ preserves classifiers.
Our proof is non-trivial, and shows that the steps constructing the classifier are preserved under pseudopullback.

As mentioned in the Introduction,
we shall prove local representability more generally,
dealing not just with a single context $\thT$,
but in the relativized situation for an extension $\thT_0\thext\thT_1$.

Why extensions, and not arbitrary $H\colon\thT_1\to\thT_0$?
The main reason is the repeated use of Proposition~\ref{prop:extReindex}.

\subsection{Models for a context extension}\label{sec:ModExt}
\begin{definition}\label{def:modExt}
  Let $\thT_0\thext \thT_1$ be an extension of contexts,
  with corresponding extension map $U\colon\thT_1\to\thT_0$,
  and let $p \colon \catE \to \baseS$ be a bounded geometric morphism.
  A \emph{strict model of $U$ in $p$} is a pair $(M,N)$
  where $M$ is a strict model of $\thT_0$ in $\baseS$,
  $N$ a strict model of $\thT_1$ in $\catE$, and $NU = p^\ast M$.

  A morphism from one such strict model, $(M,N)$, to another, $(M',N')$,
  is a pair $\phi=(\phi_{-},\phi^{-})$ where
  $\phi_{-}\colon M \to M'$ and $\phi^{-}\colon N \to N'$ are homomorphisms
  and $\phi^{-}U = p^\ast\phi_{-}$.
\end{definition}

We thus get, for each $p$, a category $\Mods{U}{p}$.
It is strictly indexed over $\GTop$ in the following way.

First suppose $f$ is a 1-cell in $\GTop$, as in Definition~\ref{def:BTop}.
If $(M,N)$ is a strict model in $p_1$,
then we define a strict model $f^\ast(M,N) = (\downstairs{f}^\ast M, f^\ast N)$
\[
  \xymatrix{
    {f^\ast N}
      \ar@{<-}[r]^{\cong}
      \ar@{.}[d]
    & {\upstairs{f}^\ast N}
      \ar@{.}[d]
    \\
    {p_0^\ast \downstairs{f}^\ast M}
      \ar@{<-}[r]_{(\stairs{f})^\ast M}
    & {\upstairs{f}^\ast p_1^\ast M}
  }
\]
where the upstairs isomorphism is the unique one obtained from Proposition~\ref{prop:extReindex}.
The action extends to morphisms between strict models of $U$,
and we obtain a functor
$\Mods{U}{f} \colon \Mods{U}{p_1} \to \Mods{U}{p_0}$.

If $\alpha \colon f \to f'$ is a 2-cell in $\GTop$,
then it gives a natural transformation from $\Mods{U}{f}$ to $\Mods{U}{f'}$.
We obtain a strict 2-functor from $\GTop^{op}$ to $\Cat$.
Its Grothendieck construction is a fibration
$(\Modsnocat{U})^{co} \to \GTop^{co}$
defined as follows.

\begin{definition}\label{def:modExt2cat}
  The data for the 2-category $\Modsnocat{U}$ is defined as follows.
  In each case, a 0-, 1- or 2-cell is the corresponding item for $\GTop$,
  equipped with extra structure in the form of models of $U$.

  A 0-cell is a bounded geometric morphism $p\colon \catE \to \baseS$,
  equipped with a strict model $(M,N)$ of $U$.

  A 1-cell from $(p_0,M_0,N_0)$ to $(p_1,M_1,N_1)$
  is a 1-cell $f\colon p_0 \to p_1$ from $\GTop$,
  equipped with a homomorphism $(f_{-},f^{-}) \colon (M_0,N_0) \to f^\ast(M_1,N_1)$.

  Given 1-cells $(f,f_{-},f^{-})$ and $(f',f'_{-},f'^{-})$,
  with the same domain and codomain,
  a 2-cell from one to the other is a 2-cell $\alpha \colon f \to f'$ in $\GTop$
  such that $(f_{-},f^{-})(\alpha^\ast (M_1,N_1)) = (f'_{-},f'^{-})$.
\end{definition}

It is clear that $\Modsnocat{U}$ is a 2-category,
with a forgetful functor $F'\colon \Modsnocat{U} \to \GTop$,
and by construction $F'^{co}$ is a split fibration.
Note that --
\begin{enumerate}
\item
  A 1-cell $(f,f_{-},f^{-})$ is cartesian iff $f_{-}$ and $f^{-}$
  are isomorphisms.
\item
  A 2-cell $\alpha$ is (co-)cartesian iff $\alpha_{-}$ and $\alpha^{-}$
  are isomorphisms.
\end{enumerate}

Note the special case of a trivial extension $\thT_0 = \thT_0$.
A model of this in $p$ is simply a model $M$ of $\thT_0$ in $\baseS$,
since the corresponding model in $\catE$ has to be $p^\ast M$.
In this case we write $\Modsnocat{(\thT_0\thext \thT_0)}$.

We have an obvious forgetful functor from $\Modsnocat{U}$ to
$\Modsnocat{(\thT_0\thext \thT_0)}$,
which (or its co-dual) is almost, but not quite, a fibration.
The problem is that $\thT_0$-homomorphisms $\phi_{\_}\colon M \to M'$
do not lift to functors for the categories of $U$-models over them.
To rectify this, we restrict to isomorphisms downstairs.

\begin{definition}
  $\GTopU{U}$ is the sub-2-category of $\Modsnocat{U}$ with all the 0-cells,
  but with only the 1-cells $(f,f_{\_},f^{\_})$ for which $f_{\_}$ is an isomorphism.
  It is full on 2-cells.
\end{definition}

\begin{proposition}
  We write $P^{co}\colon\GTopU{U}\to\GTopB{\thT_0}$ for the forgetful functor.
  Then $P\colon(\GTopU{U})^{co}\to(\GTopB{\thT_0})^{co}$
  is a split fibration.
  1-cells and 2-cells are cartesian iff they are so in $\Modsnocat{U}$ fibred over
  $\Modsnocat{(\thT_0 \thext \thT_0)}$.
\end{proposition}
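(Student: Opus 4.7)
The plan is to construct a split cleavage for $P^{co}$ directly, using Proposition~\ref{prop:extReindex} to strictify the upstairs reindexing. The key point is that Proposition~\ref{prop:extReindex}, together with the strict 2-functor $\Mods{\thT_1}{\bullet}\colon\AU\to\Cat$ built from it in Section~\ref{sec:IndCatMod}, already does all the hard work; the present proposition is essentially a repackaging of that behaviour in the fibred setting over bases in $\GTopB{\thT_0}$.

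For a 1-cell $(g,g_{-})\colon (p',M')\to(p,M)$ in $\GTopB{\thT_0}$ (write $p\colon\catE\to\baseS$ and $p'\colon\catE'\to\baseS'$), in which $g_{-}$ is invertible by definition of $\GTopB{\thT_0}$, and an object $(p,M,N)\in\GTopU{U}$ above $(p,M)$, I form the non-strict model $\upstairs{g}^\ast\cdot N$ of $\thT_1$ in $\catE'$. Its $U$-reduct is $\upstairs{g}^\ast\cdot p^\ast M$, which is isomorphic to ${p'}^\ast M'$ via the composite of $(\stairs{g})^\ast M$ with ${p'}^\ast(g_{-}^{-1})$. Proposition~\ref{prop:extReindex} then supplies a unique strict model $N'$ of $\thT_1$ in $\catE'$ with $N'U = {p'}^\ast M'$, together with a canonical isomorphism $N'\cong \upstairs{g}^\ast\cdot N$ extending the specified iso on $U$-reducts. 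Composing with the comparison $\upstairs{g}^\ast\cdot N\cong g^\ast N$ coming from the 2-functor $\Mods{\thT_1}{\bullet}$ yields $g^{-}\colon N'\to g^\ast N$, and the triple $(g,g_{-},g^{-})\colon (p',M',N')\to (p,M,N)$ is the proposed cartesian lift. It lies in $\GTopU{U}$ because $g_{-}$ is iso.

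Cartesianness then reduces to the uniqueness clause of Proposition~\ref{prop:extReindex}: given any $(h,h_{-},h^{-})\colon (p'',M'',N'')\to(p,M,N)$ in $\GTopU{U}$ together with a downstairs factorization $(k,k_{-})$ in $\GTopB{\thT_0}$ and the appropriate coherence data, the morphism $k^{-}$ is uniquely determined because its source and target are strict $\thT_1$-models pinned down by their prescribed $U$-reducts and the chain of canonical isomorphisms from which they were constructed. Cartesian 2-cells are treated analogously, using the comma AU construction for the 2-cell action from Section~\ref{sec:IndCatMod}. Splitness under composition of 1-cells in $\GTopB{\thT_0}$ follows from the same uniqueness clause --- both iterated reindexing and reindexing along the composite produce the unique strict $\thT_1$-model isomorphic to the iterated non-strict pullback $\upstairs{g_1}^\ast\cdot\upstairs{g_2}^\ast\cdot N$ --- mirroring the strict 2-functoriality argument for $\Mods{\thT}{\bullet}\colon\AU\to\Cat$ given in Section~\ref{sec:IndCatMod}.

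The characterization of cartesian 1-cells as those whose $\thT_1$-component $f^{-}$ is invertible, and cartesian 2-cells whose $\alpha^{-}$-component is invertible, then matches the description already noted for the $F'^{co}$-fibration of $\Modsnocat{U}$ over $\GTop$, because in both cases the content of cartesianness lives entirely in the upstairs strictification. I expect the main obstacle to be the bookkeeping for 2-cells --- chasing the canonical isomorphisms through the comma AU construction and verifying compatibility with the commutativity conditions in Definition~\ref{def:modExt2cat} --- but no fundamentally new ingredient is required beyond Proposition~\ref{prop:extReindex}.
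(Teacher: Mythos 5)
Your proposal is correct and is essentially the paper's argument made explicit: the paper disposes of the proposition in one line by observing that $P$ is the Grothendieck construction of the evident strict 2-functor $(\GTopB{\thT_0})^{op}\to\Cat$ (whose reindexing is exactly the Proposition~\ref{prop:extReindex}-strictified pullback you describe), and hence a split fibration by the general theory of \cite{Buckley:Fibred2CatBicat}. Your hand-built cleavage, uniqueness/splitness check, and characterization of cartesian cells are just the unwound form of that same construction, so there is no substantive difference.
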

\begin{proof}
  It is the Grothendieck construction for the evident 2-functor
  from $(\GTopB{\thT_0})^{op}$ to $\Cat$.
\end{proof}

We now fibre over pairs $(\baseS,M)$.

\begin{definition}\label{def:TopB}
  The 2-category $\TopB{\thT}$ has structure as follows.
  A \emph{0-cell} is a pair $(\baseS, M)$ where $\baseS$ is an elementary topos and $M$
  a model of $\thT$ in $\baseS$.
  A \emph{1-cell} from $(\baseS_0, M_0)$ to $(\baseS_1, M_1)$
  is a pair $(\underline{f},f_{\_})$ where $f\colon\baseS_0\to\baseS_1$ is a geometric morphism
  and $f_{\_}\colon M_0 \to \underline{f}^{\ast}M_1$ is an isomorphism.
  A \emph{2-cell} from $(\underline{f},f_{\_})$ to $(\underline{g},g_{\_})$
  is a natural isomorphism $\alpha\colon\underline{f}\to\underline{g}$
  such that $f_{\_} \alpha^{\ast}M_1 = g_{\_}$.

  The 2-category $\GTopB{\thT}$ is made from $\GTop$ by adding components $M$ and $f_{\_}$,
  and the condition on $\alpha$,
  in the same way as $\TopB{\thT}$ is made from $\Top_{\cong}$.
\end{definition}

\begin{proposition}\label{prop:GTopBTFib}
  Let $Q^{co}\colon \GTopB{\thT} \to \TopB{\thT}$ be the evident forgetful functor.
  Then $Q = (Q^{co})^{co}$ is a fibration of bicategories.
\end{proposition}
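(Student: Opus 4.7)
The plan is to exhibit $\GTopB{\thT}$ as a strict $2$-pullback and reduce the claim to Proposition~\ref{prop:GTopFib}. Inspecting Definition~\ref{def:TopB} at each cell level, a cell of $\GTopB{\thT}$ is literally a matched pair consisting of a cell of $\GTop$ and a cell of $\TopB{\thT}$ sharing the same downstairs data. First I would assemble this observation into a strict pullback square
\[
  \xymatrix{
    {\GTopB{\thT}} \ar@{->}[r] \ar@{->}[d]_{Q^{co}}
    & {\GTop} \ar@{->}[d] \\
    {\TopB{\thT}} \ar@{->}[r]
    & {\Top_{\cong}}
  }
\]
in which the bottom and right arrows are the forgetful $2$-functors that drop $M, f_\_$ and $\upstairs{\bullet}, \stairs{\bullet}$ respectively.

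Next, taking $co$-duals throughout, Proposition~\ref{prop:GTopFib} says that the right-hand edge is a fibration of bicategories. I would then appeal to the bicategorical analogue of the standard $1$-categorical fact that pullbacks of fibrations are fibrations, to conclude that $Q$ is a fibration. The concrete reason is that cartesian lifts can be built componentwise: given $(p_1, M_1)$ in $\GTopB{\thT}$ and a base $1$-cell $(\underline{f}, f_\_)\colon (\baseS_0, M_0) \to (\baseS_1, M_1)$ in $\TopB{\thT}$, invoke Proposition~\ref{prop:GTopFib}(3) to form the pseudopullback of $p_1$ along $\underline{f}$, yielding a pseudopullback square $f$ in $\GTop$ with $\downstairs{f} = \underline{f}$ and a new $0$-cell $p_0\colon \catE_0 \to \baseS_0$. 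The candidate cartesian lift is $(f, f_\_)\colon (p_0, M_0) \to (p_1, M_1)$, and its universal property is inherited from cartesianness of $f$ in $\GTop^{co}$ together with the fact that the remaining datum $f_\_$ lives entirely downstairs and hence is already supplied by $\TopB{\thT}^{co}$.

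For cartesian $2$-cells, Proposition~\ref{prop:GTopFib}(2) already provides lifts with identity upstairs component; the extra compatibility $f_\_\,\alpha^\ast M_1 = g_\_$ required by $\TopB{\thT}$ is precisely what Definition~\ref{def:TopB} imposes on $2$-cells of $\GTopB{\thT}$, so no further work is needed. The main obstacle will really be bookkeeping --- tracking the various components ($\upstairs{f}, \stairs{f}, \downstairs{f}, M, f_\_$, and their $2$-cell analogues) across the four $2$-categories and checking universal properties in the correct direction after $co$-dualisation --- rather than any new conceptual input, since the model data $M$ lives purely downstairs and is faithfully tracked by the $\TopB{\thT}$ factor of the pullback.
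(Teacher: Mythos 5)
Your proposal is correct, but it is organised differently from the paper's. The paper proves the proposition by simply rerunning the argument of Proposition~\ref{prop:GTopFib} with the extra data $M$, $f_{\_}$ carried along (its proof is literally ``Much as in Proposition~\ref{prop:GTopFib}''), whereas you factor the claim through a structural observation: $\GTopB{\thT}$ is the strict $2$-pullback of $\GTop \to \Top_{\cong} \leftarrow \TopB{\thT}$, and one then invokes pullback-stability of fibrations of bicategories. The pullback identification is right --- at every cell level Definition~\ref{def:TopB} really does just pair a cell of $\GTop$ with a cell of $\TopB{\thT}$ over a common downstairs datum, with the $2$-cell compatibility $f_{\_}\,\alpha^{\ast}M_1 = g_{\_}$ living entirely in the $\TopB{\thT}$ factor. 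What your route buys is modularity: no re-verification of cartesianness is needed once the pullback is recognised. What it costs is the appeal to pullback-stability of bicategorical fibrations, which the paper never states and which, in the weak (Buckley) setting, does require the check you gesture at: the iso $2$-cell filling the lifted triangle in $\GTop$ must project to an identity downstairs so that it pairs with the (already determined) $\TopB{\thT}$ component to give a legitimate $2$-cell of the pullback. Your componentwise description of the lifts supplies exactly this, so the argument closes; it would be worth stating that one step explicitly rather than leaving it inside ``pullbacks of fibrations are fibrations''.
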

\begin{proof}
  Much as in Proposition~\ref{prop:GTopFib}.
\end{proof}

We now get a diagram of 2-functors as follows,
where the $P$s and $Q$s are fibrations.
The left hand stack is for the relativized situation $\thT_0\thext\thT_1$,
while the right hand stack is the spacial case $\thT_0 = \thone$.

\begin{equation}\label{eq:mainFibs}
  \xymatrix{
    {(\GTopU{U})^{co}}
      \ar@{->}[d]_{P}
      \ar@{->}[dr]
    \\
    {(\GTopB{\thT_0})^{co}}
      \ar@{->}[d]_{Q}
      \ar@{->}[dr]
    & {(\GTopU{\thT_1})^{co}}
      \ar@{->}[d]^{P}
    \\
    {(\TopB{\thT_0})^{co}}
      \ar@{->}[dr]
    & {\GTop^{co}}
      \ar@{->}[d]^{Q}
    \\
    & {\Top_{\cong}^{co}}
  }
\end{equation}

\subsection{Context extensions fibred over models}\label{sec:ContExtFibMod}
Our aim now is to show that, in diagram~\eqref{eq:mainFibs},
each $P$ is locally representable over its $Q$.
(Note that the right hand one is a special case of the left hand,
for when $\thT_0 = \thone$.)
The existence of the representing objects (as classifying toposes) is straightforward;
what seems more novel is their preservation by pseudopullback.

\begin{proposition}
  Let $\thT_0\thext \thT_1$ be a context extension.
  Then it is also a geometric extension over any elementary topos $\baseS$.
\end{proposition}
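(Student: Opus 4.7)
The plan is to prove this by induction on the number of atomic steps in the extension $\thT_0 \thext \thT_1$, showing that each atomic sketch-extension step can be realized as a finite composition of the three kinds of simple geometric-extension steps defined in Section~\ref{sec:ElephTh}. By the description of context extensions in~\cite{Vickers:AUSk}, an atomic step either adjoins a primitive node, a primitive edge, a commutativity, or a universal cone or cocone on a fresh node (for a terminal, pullback, initial, pushout, or list object), so it suffices to handle each such kind of step.

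The cases of primitives are nearly tautological. Adjoining a primitive node is already the simple extension by primitive object $\thT \mapsto \thT \times \thob$. Adjoining a primitive edge $e\colon X \to Y$ between existing nodes is the simple functional extension for the geometric constructs $H_0, H_1 \colon \thT \to \thob$ that pick out $X$ and $Y$. Adjoining a commutativity $e_1 e_2 = e_3$ between two existing composable paths $e_1 e_2, e_3 \colon X \to Y$ becomes a simple geometric quotient: let $H_1 = X$, let $H_0$ be the geometric construct giving the equalizer of $e_1 e_2$ and $e_3$ (available because any topos has finite limits), and take $\phi \colon H_0 \to H_1$ to be the inclusion; forcing $\phi$ to be an isomorphism forces the two paths to agree.

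Adjoining a universal on a fresh node is the interesting case. As a representative, consider adjoining a pullback universal for an existing opspan $A \to C \leftarrow B$ in $\thT_0$. The canonical pullback $A \times_C B$ is available as a geometric construct on $\thT_0$, since all our toposes have finite limits (and, in the list-object case, $\baseS$ has an nno, so every bounded $\baseS$-topos does too). I would decompose the step into three substeps: first, a simple extension by primitive object to adjoin the fresh node $P$; second, a simple functional extension with $H_0$ giving $P$ and $H_1$ giving $A \times_C B$, adjoining a morphism $u \colon P \to A \times_C B$; third, a simple geometric quotient forcing $u$ to be an isomorphism. The sketch-theoretic projections $P \to A$ and $P \to B$ are then derived by composition with the canonical projections of $A \times_C B$, and the required commutativity of the pullback square is automatic. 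Terminals, initials, pushouts, and list objects all follow the same three-step recipe.

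The main obstacle is that this translation does not reproduce the strict model category $\Mods{\thT_1}{\bullet}$ on the nose: a model of the geometric extension carries a fresh object $P$ equipped with an isomorphism $u$ to the canonical pullback, whereas a strict sketch-model has $P$ literally equal to the canonical pullback. This is, however, precisely the situation controlled by Proposition~\ref{prop:extReindex}, which furnishes a canonical strict replacement and hence a fibrewise equivalence between the geometric-extension model category and $\Mods{\thT_1}{\catE}$, natural in $\catE$. Since elephant theories are taken as indexed categories, where equivalence on fibres is the correct notion of sameness, this suffices to identify the geometric extension we have built with $\Mods{\thT_1}{\bullet}$ restricted to bounded $\baseS$-toposes, completing the induction.
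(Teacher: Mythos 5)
Your handling of primitive nodes, primitive edges, and commutativities matches the paper's proof exactly (primitive object, simple functional extension, and simple geometric quotient via the equalizer, respectively). The divergence is in the universal case, and it is worth comparing. The paper observes that, because the elephant theory attached to a context is its category of \emph{strict} models, adjoining a universal forces the interpretation of the fresh node to be the canonical construction, so the strict model categories before and after the step are \emph{isomorphic} and the step contributes no geometric extension at all. You instead adjoin the fresh node as a genuine primitive object together with a morphism to the canonical construct which is then forced to be invertible; this is a legitimate three-step geometric extension, but its model category consists of non-strict interpretations of the universal and is only \emph{equivalent}, not isomorphic, to $\Mods{\thT_1}{\catE}$. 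Your appeal to Proposition~\ref{prop:extReindex} to bridge that gap is sound in itself, but it weakens what the proposition delivers: the geometric extension structure is later pulled back strictly along a model $M$ (Definition~\ref{def:EoverM} via Proposition~\ref{prop:elephantpb}), and that argument manipulates the simple steps on the nose, so an identification only up to fibrewise equivalence would have to be carried through all of the subsequent strictness bookkeeping. Your route costs three simple steps plus a coherence argument per universal and buys nothing over the paper's one-line observation that the step is vacuous for strict models; I would adopt that observation for the universal case and keep your (correct) treatment of the other three.
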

\begin{proof}
  It suffices to check the different kinds of simple context extension.
  Note that any node $X$ in $\thT_0$ gives a context homomorphism $\thob\thmorph \thT_0$,
  and hence a geometric construct on $\thT_0$.
  Likewise, any edge or composite of edges gives a morphism between geometric constructs.

  An extension by primitive node is a geometric extension by primitive sort.

  A simple functional extension of contexts (adjoining a primitive edge) is also
  a simple functional extension of geometric theories.

  An extension by a universal is essentially no geometric extension at all,
  as the categories of (strict) models are isomorphic.

  An extension by commutativities is a simple geometric quotient,
  as imposing an equality between morphisms is equivalent to requiring the equalizer
  to be an isomorphism.
\end{proof}

\begin{proposition}
  Let $\thT_0$ be a context, and $M$ a strict model of $\thT_0$ in an elementary topos $\baseS$.
  Then there is an elephant morphism $M\colon\thone \to \thT_0$ that, on $\baseS$-topos $\catE$,
  takes $\ast$ to $p^\ast M$.
\end{proposition}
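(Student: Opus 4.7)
The plan is to exhibit $M\colon\thone\to\thT_0$ as a pseudonatural transformation of elephant theories over $\baseS$, by reassembling the components of the 2-functor $\Mods{\thT_0}{\bullet}\colon\AU\to\Cat$ of Section~\ref{sec:IndCatMod}. At each 0-cell $p\colon\catE\to\baseS$ of $\BTop/\baseS$, I send the unique object $\ast$ of $\thone(\catE)$ to the strict model $p^{\ast}M\in\Mods{\thT_0}{\catE}$; here $p^{\ast}\colon\baseS\to\catE$ is the inverse image AU-functor, and $p^{\ast}M$ is the unique strict model isomorphic to $p^{\ast}\cdot M$ and agreeing with it on primitive nodes, as supplied by Proposition~\ref{prop:extReindex}.

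For the pseudo-naturality data along a 1-cell $f=(\upstairs{f},\stairs{f})\colon p_0\to p_1$ in $\BTop/\baseS$, I need an isomorphism $p_0^{\ast}M\cong\upstairs{f}^{\ast}(p_1^{\ast}M)$ in $\Mods{\thT_0}{\catE_0}$, where the outer $\upstairs{f}^{\ast}$ denotes the elephant reindexing by $f$, i.e.\ the action on strict models of the AU-functor $\upstairs{f}^{\ast}\colon\catE_1\to\catE_0$. Strict 2-functoriality of $\Mods{\thT_0}{\bullet}$ on composition of AU-functors gives the equality $\upstairs{f}^{\ast}(p_1^{\ast}M)=(p_1^{\ast}\upstairs{f}^{\ast})^{\ast}M$ on the nose, while the invertible structure 2-cell $\stairs{f}\colon\upstairs{f}p_1\cong p_0$ (downstairs identity on $\baseS$) dualises to an invertible 2-cell $p_1^{\ast}\upstairs{f}^{\ast}\cong p_0^{\ast}$ of AU-functors. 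Applying the 2-cell clause of the 2-functor $\Mods{\thT_0}{\bullet}$ to this isomorphism evaluated at $M$ yields the required iso of strict models. For a 2-cell $\alpha\colon f\to f'$ in $\BTop/\baseS$, naturality of the transition isomorphism follows from the 2-functoriality of $\Mods{\thT_0}{\bullet}$ applied to the dual of $\upstairs{\alpha}$.

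The remaining coherence axioms of a pseudonatural transformation, for composites and for identity 1-cells, reduce by pasting to the coherences of the 2-functor $\Mods{\thT_0}{\bullet}$ together with the pasting law for the $\stairs{\cdot}$ squares under composition of 1-cells in $\GTop$. The main obstacle is simply disentangling the overloaded notation $(-)^{\ast}$ between inverse image AU-functors and the action on strict models; all substantive content has already been isolated in Proposition~\ref{prop:extReindex} and the 2-functoriality established in Section~\ref{sec:IndCatMod}.
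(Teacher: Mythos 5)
Your proposal is correct and follows the paper's own route: the component at $p\colon\catE\to\baseS$ is $p^\ast M$, and the pseudonaturality isomorphism along a 1-cell over $\baseS$ is obtained by applying the 2-cell action of the strict 2-functor $\Mods{\thT_0}{\bullet}$ to the triangle isomorphism, exactly as in the paper's (much terser) proof, which simply exhibits the square as pseudo-natural subject to $\alpha M\colon f^\ast p^\ast M\cong q^\ast M$. Your additional unwinding of the strict composition law $f_1^\ast f_0^\ast M=(f_0f_1)^\ast M$ and of the coherence conditions is a faithful elaboration rather than a different argument.
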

\begin{proof}
  Although the elephant theories for both $\thone$ and $\thT_0$ are strictly indexed,
  $M$ is not a strict morphism.
  Consider a morphism of $\baseS$-toposes
  \[
    \xymatrix{
      {\catF}
        \ar@{->}[rr]^{f}
        \ar@{->}[rd]_{q}
      & {}
        \ar@{}[d]|{\Downarrow\alpha}
      & {\catE}
        \ar@{->}[ld]^{p}
      \\
      & {\baseS}
    }
    \text{,}\quad
    \xymatrix{
      {\thone(\catF)}
        \ar@{=}[r]
        \ar@{->}[d]_{M(\catF)}
      & {\thone(\catE)}
        \ar@{->}[d]^{M(\catE)}
      \\
      {\thT_0(\catF)}
        \ar@{<-}[r]_{\thT_0(f)}
      & {\thT_0(\catE)}
    }
  \]
  On the right is a pseudo-naturality square,
  subject to the isomorphism $\alpha M \colon f^\ast p^\ast M \cong q^\ast M$.
\end{proof}

\begin{definition}\label{def:EoverM}
  Let $\thT_0 \thext \thT_1$ be a context extension and $M$ a strict model of $\thT_0$
  in an elementary topos $\baseS$.
  By Proposition~\ref{prop:elephantpb} we can pull back the geometric extension
  for $\thT_0\thext \thT_1$ along $M\colon\thone\to \thT_0$,
  getting a geometric theory $\thT_1/M$ over $\baseS$.
  It has a classifying topos $\baseS[\thT_1/M]$.
\end{definition}

\begin{theorem}\label{thm:pspbClassifiers}
  Let $\thT_0 \thext \thT_1$ be a context extension and $M$ a strict model of $\thT_0$
  in an elementary topos $\baseS_1$.
  Let the following diagram be a pseudopullback in $\Top$.
  \[
    \xymatrix{
      {\catE}
        \ar@{->}[r]^-{\upstairs{f}}
        \ar@{->}[d]_{p_0}
        \ar@{}[dr]|{f\Downarrow}
      & {\baseS_1[\thT_1/M]}
        \ar@{->}[d]^{p_1}
      \\
      {\baseS_0}
        \ar@{->}[r]_{\downstairs{f}}
      & {\baseS_1}
    }
    \text{,}\quad
    \stairs{f} \colon\upstairs{f}p_1 \cong p_0\downstairs{f}
    \text{.}
  \]
  Then $p_0\colon \catE\to\baseS_0$ serves as a classifying topos
  $\baseS_0[\thT_1/\downstairs{f}^\ast M]$.
\end{theorem}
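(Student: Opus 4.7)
The plan is to compose the universal property of the pseudopullback with the classifying property of $\baseS_1[\thT_1/M]$, using Proposition~\ref{prop:extReindex} at each step to produce strict models on the nose. Boundedness of $p_0$ comes for free, since pseudopullback along an arbitrary geometric morphism preserves bounded geometric morphisms, so $\catE$ is indeed a bounded $\baseS_0$-topos.

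To construct the generic model, write $G_1$ for the generic $\thT_1/M$-model on $\baseS_1[\thT_1/M]$, so $G_1$ is strict with $G_1 U = p_1^\ast M$. The strict left action of Section~\ref{sec:IndCatMod} yields a strict $\thT_1$-model $\upstairs{f}^\ast G_1$ on $\catE$, and by Lemma~\ref{lem:fMH}(1) we have $(\upstairs{f}^\ast G_1) U = (\upstairs{f} p_1)^\ast M$ as an equality of strict models. The pseudopullback 2-cell $\stairs{f}$ then induces, via the $\Sigma_{-,-}$ of Theorem~\ref{thm:GrayAction}, an isomorphism $(\upstairs{f} p_1)^\ast M \cong (p_0 \downstairs{f})^\ast M = p_0^\ast \downstairs{f}^\ast M$; feeding this into Proposition~\ref{prop:extReindex} produces a unique strict $\thT_1$-model $G_0 \cong \upstairs{f}^\ast G_1$ with $G_0 U = p_0^\ast \downstairs{f}^\ast M$, exactly a strict $\thT_1/\downstairs{f}^\ast M$-model on $\catE$.

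For the classifying equivalence, take any bounded $\baseS_0$-topos $q\colon\catF\to\baseS_0$. By the universal property of the pseudopullback, giving a morphism $g\colon\catF\to\catE$ over $\baseS_0$ is equivalent, up to canonical coherent isomorphism, to giving a morphism $h\colon\catF\to\baseS_1[\thT_1/M]$ together with an isomorphism $p_1 h \cong \downstairs{f} q$, that is, to making $\catF$ into a bounded $\baseS_1$-topos via $\downstairs{f} q$ and giving a morphism of $\baseS_1$-toposes to the classifier. The classifying property of $\baseS_1[\thT_1/M]$ then identifies such $h$ up to isomorphism with strict $\thT_1$-models $N$ on $\catF$ satisfying $NU = (\downstairs{f} q)^\ast M$; and the strict 2-functoriality $(\downstairs{f} q)^\ast = q^\ast \downstairs{f}^\ast$ of Section~\ref{sec:IndCatMod} shows this is precisely a strict $\thT_1/\downstairs{f}^\ast M$-model of $\catF$ over $\baseS_0$. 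Pulling $G_0$ back along $g$, and invoking Proposition~\ref{prop:extReindex} once more to strictify, recovers $N$ on the nose, so the composite correspondence is exactly $g \mapsto g^\ast G_0$; a parallel argument on 2-cells (using the 2-dimensional universal property of the pseudopullback) gives the full equivalence of categories.

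The hard part will be the bookkeeping of strictness through this cascade of specified isomorphisms---the pseudopullback 2-cell $\stairs{f}$, the coherence isomorphisms $\Sigma_{-,-}$, the uniqueness-up-to-specified-isomorphism inherent in the classifying property, and the Proposition~\ref{prop:extReindex} replacements---so that they match coherently in both the 1-cell and 2-cell parts of the classifying equivalence. Conceptually the argument is just the composition of two universal properties, but producing strict equalities $NU = q^\ast \downstairs{f}^\ast M$ at each stage is where Proposition~\ref{prop:extReindex} does real work.
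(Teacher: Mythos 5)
Your argument has a genuine gap at its central step, and it is precisely the gap that forces the paper's proof to be as long as it is. You reduce a morphism $g\colon\catF\to\catE$ over $\baseS_0$ to a morphism $h\colon\catF\to\baseS_1[\thT_1/M]$ with an isomorphism $p_1 h\cong\downstairs{f}q$, and then invoke ``the classifying property of $\baseS_1[\thT_1/M]$'' to identify such $h$ with models of $\thT_1/M$ in $\catF$. But the classifying property is only available (via \cite[B4.2.9]{Elephant1}) for \emph{bounded} $\baseS_1$-toposes, and $\catF$ regarded over $\baseS_1$ via $\downstairs{f}q$ is not bounded in general: $q$ is bounded over $\baseS_0$, but $\downstairs{f}$ is an arbitrary geometric morphism (take $q=\Id$ to see the composite can fail to be bounded). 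The statement you need --- that $\baseS_1[\thT_1/M]$ classifies $\thT_1/M$-models in arbitrary, not necessarily bounded, $\baseS_1$-toposes --- is exactly Proposition~\ref{prop:classTopGen}, which the paper \emph{deduces from} Theorem~\ref{thm:pspbClassifiers}. So your ``composition of two universal properties'' is circular as it stands, or at least rests on an unproved claim essentially equivalent to the theorem.

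The paper's actual route avoids this entirely: it inducts on the simple steps of the extension $\thT_0\thext\thT_1$, factors each classifier as a tower $\baseS''\to\baseS'\to\baseS$, shows the top stage classifies the composite theory, and then reduces every simple step to two concrete constructions --- adjoining a torsor for an internal category (classified by $[\catC,\baseS']$ via Diaconescu) and inverting a morphism (a sheaf subtopos for a local operator) --- each of which is known to be preserved by pseudopullback by explicit results in \cite{Elephant1} (B3.2.7, B3.2.14, A4.5.14(e)). That is where the real work lives; the ``bookkeeping of strictness'' you flag as the hard part is actually the routine part (handled by Proposition~\ref{prop:extReindex} and Lemma~\ref{lem:fMH}, much as you describe for the construction of $G_0$, which is fine). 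If you want to salvage your outline, you must first prove the classifying property over unbounded $\baseS_1$-toposes independently, and there is no obvious way to do that without something like the paper's step-by-step stability argument.
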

\begin{proof}
  First, pseudopullback squares are preserved under composition with equivalences over
  $\baseS_0$ and $\baseS_1$,
  so it suffices to show that there is \emph{some} pseudopullback square
  whose vertical maps are classifiers as stated.

  Suppose the extension is of the form $\thT_0\thext \thT'_1\thext \thT_1$
  where $\thT'_1\thext \thT_1$ is simple,
  with extension maps
  $\xymatrix{{\thT_1} \ar@{->}[r]^{U'} & {\thT'_1} \ar@{->}[r]^{U} & {\thT_0}}$.
  Then by induction we can assume the result holds for $\thT_0\thext \thT'_1$ and
  examine the cases for $\thT'_1\thext \thT_1$.

  For extension by primitive node, we have the task of constructing an object classifier,
  and this is a special case of classifying torsors over an internal category $\catC$,
  here the category of finite sets:
  objects are natural numbers, morphisms defined in the appropriate way.

  For extension by commutativity, we have already remarked that this is equivalent to
  inverting a morphism.

  For a simple functional extension, adjoining a morphism from $X$ to $Y$,
  we can decompose the classification problem into two steps of the above kinds.
  First, we adjoin a subjobject of $X\times Y$,
  and this is equivalent to adjoining a torsor (ideal) for the poset $\fin(X\times Y)$.
  Next we impose some axioms for single-valuedness and totality,
  and this is equivalent to making some morphism invertible.

  It follows that we reduce to two cases over $\thT'_1$:
  adjoining a torsor for an internal category $\catC$,
  and forcing the invertibility of some morphism.
  (Although these are not simple extensions of contexts,
  we can still work with them as single steps.)

  Suppose we have a classifier $p\colon \baseS' = \baseS[\thT'_1/M] \to \baseS$,
  with generic model $N'_G$,
  and assume the classifier $p' \colon \baseS'[\thT_1/N'_G] \to \baseS'$ exists,
  with generic model $N_G$.
  Note that, using Lemma~\ref{lem:fMH}, $N_G U'U = p'^\ast N'_G U = (p'p)^\ast M$.
  \[
    \xymatrix{
      {\baseS'' = \baseS'[\thT_1/N'_G]}
        \ar@{->}[d]^{p'}
      \\
      {\baseS' = \baseS[\thT'_1/M]}
        \ar@{->}[d]^{p}
      \\
      {\baseS}
    }
  \]

  We show that $\baseS''$ classifies $\thT_1/M$.
  For the ``essential surjectivity'' part, suppose $N$ is a model of $\thT_1/M$ in $(\catF,q)$.
  Then $NU'$ is a model of $\thT'_1/M$,
  so we get $g = (\upstairs{g},\alpha)\colon(\catF,q)\to(\baseS',p)$ with
  $NU'\cong g^\ast N'_G \cong \upstairs{g}^\ast N'_G$.
  Now using Proposition~\ref{prop:extReindex} we can find a model $N'\cong N$ of $\thT_1$
  with $N'U' = \upstairs{g}^\ast N'_G$, so $N'$ is a model of $\thT_1/N'_G$ in $(\catF,\upstairs{g})$.
  We now get the required geometric morphism to $\baseS''$.

  Now suppose we have two morphisms
  $f_i =(\upstairs{f}_i,\alpha_i)\colon(\catF,q)\to (\baseS'',p'p)$ ($i=0,1$)
  and a morphism $\beta\colon f_0^\ast N_G \to f_1^\ast N_G$.
  Let us write $q'_i = \upstairs{f}_i p'$,
  so that $\catF$ becomes two distinct toposes $(\catF,q'_i)$ over $\baseS'$.
  Using Lemma~\ref{lem:fMH} we find $q'^\ast_i N'_G = f_i^\ast p'^\ast N'_G = f_i^\ast N_G U$,
  and so we get $\beta U \colon q'^\ast_0 N'_G \to q'^\ast_1 N'_G$.
  We obtain a unique $\gamma'\colon q'_0 \to q'_1$ such that
  $\gamma'^\ast N'_G = \beta U$,
  and this gives us a geometric morphism
  $q' = \langle q'_0, \gamma', q'_1 \rangle \colon [\to,\catF] \to \baseS'$
  where ``$\to$'' here denotes the category with two objects and three morphisms.
  Let us write $h_i \colon \catF \to [\to,\catF]$ for the two geometric morphisms
  whose inverse image parts are the domain and codomain functors,
  and $\eta\colon h_0 \to h_1$ for the corresponding natural transformation.
  Then $h_i q' \cong q'_i$, and in fact we have equality on the inverse image parts.

  $([\to,\catF],q')$ has a model $N$ of $\thT_1$ given by
  $ \xymatrix{ {f_0^\ast N_G} \ar@{->}[r]^{\beta} & {f_1^\ast N_G}}$.
  It is not a model of $\thT_1/N'_G$,
  since $q'^\ast N'_G$ uses $\upstairs{f}_i^\ast p^\ast N'_G$,
  which are only isomorphic to $f_i^\ast p'^\ast N'_G = f_i^\ast N_G U$
  (see Section~\ref{sec:ModExt}).
  However, by Proposition~\ref{prop:extReindex} we can find a model
  $N' = (\xymatrix{ {N'_0} \ar@{->}[r]^{\beta'} & {N'_1}})$ of $\thT_1$ isomorphic to $\beta$,
  and whose $\thT'_1$ reduct is $q'^\ast N'_G$.
  From this we get $g = (\upstairs{g},\delta) \colon ([\to,\catF],q') \to (\baseS'',p')$
  such that $g^\ast N_G \cong N'$ over $q'^\ast N'_G$.
  \[
    \xymatrix{
      & {[\to,\catF]}
        \ar@{->}[dr]^{g}
        \ar@{->}[dd]^(0.3){q'}
      \\
      {\catF}
        \ar@{->}[ur]^{h_i}
        \ar@{->}[rr]^(0.3){\upstairs{f}_i}
        \ar@{->}[dr]_{q'_i}
      & & {\baseS''}
        \ar@{->}[dl]^{p'}
      \\
      & {\baseS'}
    }
  \]
  Now for each $i$ we find a $\thT_1$-isomorphism
  $h_i^\ast g^\ast N_G \cong h_i^\ast N' \cong \upstairs{f}_i^\ast N_G$
  over $q'^\ast_i N'_G$,
  and so we have a unique isomorphism $h_i g \cong \upstairs{f}_i$ giving rise to it,
  and it must be the identity over $\baseS$.
  Putting these together with $\eta g \colon h_0 g \to h_1 g$ gives our required
  $\gamma\colon f_0 \to f_1$.

  Now it remains only to show that our classifiers $\baseS'[\thT_1/N'_G]$ can be found
  in a way that is preserved under pseudopullback.
  The argument parallels that of~\cite[B3.3.6]{Elephant1}.

  In one case, $\thT_1$ adjoins a torsor for an internal category $\catC$ in $\baseS'$.
  Here we can take the classifier to be $[\catC,\baseS']$
  by Diaconescu's Theorem, and this can be pulled back along
  any $g\colon\catF\to\baseS'$ to $[g^\ast\catC,\catF]$ over $\catF$.
  (See~\cite[B3.2.7, B3.2.14]{Elephant1}.)

  In the other case, $\thT_1$ imposes an invertibility for a morphism $u\colon X\to Y$
  in $\baseS'$.
  Here $p'\colon \baseS'' \to \baseS'$ is an inclusion,
  and by~\cite[A4.3.11]{Elephant1} it can be taken to be the topos of sheaves
  for the smallest local operator for which $\mathrm{im}(u) \rightarrowtail Y$
  and $X \rightarrowtail \mathrm{kp}(u)$, the kernel pair, are both dense.
  For inverting both of these monomorphisms will make $u$ invertible.
  By~\cite[A4.5.14(e)]{Elephant1} its pseudopullback along $g$ is also an inclusion,
  in fact for the smallest local operator that makes $g^\ast u$ an isomorphism.
\end{proof}

\begin{proposition}\label{prop:classTopGen}
  Let $\thT_0 \thext \thT_1$ be a context extension and $M$ a strict model of $\thT_0$
  in an elementary topos $\baseS$.
  Then $\baseS[\thT_1/M]$ has the classifying topos property for arbitrary
  $q\colon\catF\to\baseS$, not necessarily bounded.
\end{proposition}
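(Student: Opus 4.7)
The plan is to reduce to Theorem~\ref{thm:pspbClassifiers}, using the 2-dimensional universal property of pseudopullback in $\Top$ to convert an arbitrary base morphism $q\colon\catF\to\baseS$ into one of the form $\mathrm{Id}_{\catF}$, which is trivially bounded. The notion of ``model of $\thT_1/M$ in $\catF$'' for unbounded $q$ should be understood as the evident extension of Definition~\ref{def:modExt}: a strict model $N$ of $\thT_1$ in $\catF$ with $NU = q^{\ast}M$.

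Concretely, I would first form the pseudopullback of $p_1\colon\baseS[\thT_1/M]\to\baseS$ along $q$, getting a topos $\catE$ with projection $p_{\catF}\colon\catE\to\catF$ that is bounded by~\cite[B3.3.6]{Elephant1}. Then by Theorem~\ref{thm:pspbClassifiers} (taking $\baseS_1=\baseS$, $\baseS_0=\catF$ and $\downstairs{f}=q$), the morphism $p_{\catF}$ serves as the classifying topos $\catF[\thT_1/q^{\ast}M]$ over $\catF$.

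Next I would invoke the universal property of pseudopullback, giving an equivalence between the category of geometric morphisms $\catF \to \baseS[\thT_1/M]$ over $\baseS$ (in the sense of 1-cells in $\GTop$ with downstairs part (isomorphic to) $q$) and the category of sections of $p_{\catF}$ over $\catF$. Since $\mathrm{Id}_{\catF}\colon\catF\to\catF$ is trivially a bounded geometric morphism, the classifying property of $\catE$ over $\catF$ already applies there, and the latter is equivalent to the category of models of $\thT_1/q^{\ast}M$ in $\catF$. Unpacking Definition~\ref{def:EoverM}, these are exactly strict models $N$ of $\thT_1$ in $\catF$ whose $\thT_0$-reduct equals $q^{\ast}M$ --- i.e., models of $\thT_1/M$ in $\catF$ relative to $q$ in the extended sense.

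The main obstacle is verifying that this chain of equivalences composes compatibly with 2-cells into a genuine equivalence of categories, rather than just a bijection on isomorphism classes of objects. This rests on the 2-dimensional universal property of pseudopullback and on the naturality of the classifier construction of Definition~\ref{def:EoverM} in the base topos, which itself is extracted from the step-by-step geometric-extension analysis already carried out in the proof of Theorem~\ref{thm:pspbClassifiers}. Once this coherence is in hand, the proposition follows directly.
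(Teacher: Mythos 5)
Your proposal is correct and follows essentially the same route as the paper: pseudopull back $\baseS[\thT_1/M]$ along $q$, invoke Theorem~\ref{thm:pspbClassifiers} to see the result classifies $\thT_1/q^\ast M$ over $\catF$, use that $\catF$ is (trivially, via the identity) bounded over itself to turn a model into a section, and compose with the pseudopullback projection; the universal property of the pseudopullback handles the fullness and faithfulness. The paper states this more tersely but the decomposition and the role of each ingredient are identical.
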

\begin{proof}
  If we have $q\colon \catF \to \baseS$, and a model $N$ of $\thT_1/q^\ast M$,
  then by Theorem~\ref{thm:pspbClassifiers} we can make a diagram
  \[
    \xymatrix{
      {\catF}
        \ar@{->}[r]^-{\upstairs{g}}
        \ar@{=}[dr]^{\beta\Downarrow}
      & {\catF[\thT_1/q^\ast M]}
        \ar@{->}[r]^{\upstairs{q}}
        \ar@{->}[d]^{p'}
        \ar@{}[dr]|{\alpha\Downarrow}
      & {\baseS[\thT_1/M]}
        \ar@{->}[d]^{p}
      \\
      & {\catF}
        \ar@{->}[r]_{q}
      & {\baseS}
    }
    \text{,}
  \]
  where the square is a pseudopullback and $g^\ast N'_G \cong N$.

  On closer examination we find that
  $(\upstairs{g},\beta)(\upstairs{q},\alpha)$ provides a suitable morphism from $\catF$
  to $\baseS[\thT_1/M]$ over $\baseS$ as required for $N$,
  and also we have the appropriate fullness and faithfulness conditions.%
\end{proof}

Putting together these results, we now obtain --

\begin{theorem}\label{thm:locRep}
  In diagram~\eqref{eq:mainFibs}, the left hand fibration $P$ is locally representable
  over its $Q$.
\end{theorem}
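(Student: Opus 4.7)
The plan is to verify directly the two conditions of Definition~\ref{def:locRepr}. For each 0-cell $w = (\baseS, M)$ of $\TopB{\thT_0}$, I would take the classifying topos $p_M \colon \baseS[\thT_1/M] \to \baseS$ from Definition~\ref{def:EoverM}, and use Proposition~\ref{prop:extReindex} to replace its underlying generic $\thT_1$-model by its unique strict representative $N_G$ satisfying $N_G U = p_M^\ast M$. This furnishes a canonical 0-cell $x_w = (p_M, M, N_G)$ of $(\GTopU{U})^{co}$ lying over $w$, which is my candidate representing object.

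Representability of $P_w$ would then follow from Proposition~\ref{prop:classTopGen}. Given an arbitrary object $y = (q \colon \catF \to \baseS, M, N)$ in the fibre $P_w$, that proposition produces a geometric morphism $\upstairs{g} \colon \catF \to \baseS[\thT_1/M]$ over $\baseS$ with $\upstairs{g}^\ast N_G \cong N$; rigidifying via Proposition~\ref{prop:extReindex} gives a $P$-cartesian 1-cell $y \to x_w$ in $P_w$. Terminality of such cartesian 1-cells, which is condition~(2) of Proposition~\ref{prop:psfunctorRepr}, then reduces to the full-and-faithful half of the classifying property, translating 2-cells in $P_w$ back to 2-cells of geometric morphisms over $\baseS$ compatible with the model data.

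For geometricity, suppose $f = (\underline{f}, f_{\_}) \colon w' = (\baseS', M') \to w$ is a 1-cell in $\TopB{\thT_0}$ and $h \colon y \to x_w$ any $PQ$-cartesian lift of $f$. By Proposition~\ref{prop:GTopFib}(1), transferred to the $\GTopU{U}$-setting with model data lifted through Proposition~\ref{prop:extReindex}, $h$ is obtained from a pseudopullback square of the underlying geometric morphisms along $\underline{f}$. Theorem~\ref{thm:pspbClassifiers} identifies the vertical geometric morphism in that pseudopullback as a classifier $\baseS'[\thT_1/\underline{f}^\ast M] \to \baseS'$. Combined with the isomorphism $f_{\_} \colon M' \cong \underline{f}^\ast M$ and invariance of classifying toposes under isomorphism of the classified model, this identifies $y$ as a representing object for $P_{w'}$.

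The heavy lifting has already been done in Theorem~\ref{thm:pspbClassifiers}; what remains is fibrational bookkeeping. I expect the main obstacle to be coherently tracking the two invocations of Proposition~\ref{prop:extReindex} — one in the very construction of $x_w$, one in upgrading pseudopullbacks of geometric morphisms to cartesian 1-cells of $\GTopU{U}$ — and checking that the resulting isomorphisms compose as expected. Lemma~\ref{lem:fMH} should prevent surprises, since it guarantees that the reindexing data attached to the extension map $U$ behaves trivially for extension maps.
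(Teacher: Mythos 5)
Your proposal is correct and follows essentially the route the paper intends: the paper offers no separate argument for Theorem~\ref{thm:locRep} beyond ``putting together these results,'' namely taking $x_w = \baseS[\thT_1/M]$ with its (strict) generic model for fibre representability, using the classifying-topos property (Proposition~\ref{prop:classTopGen}) for condition~(1) of Definition~\ref{def:locRepr}, and invoking Theorem~\ref{thm:pspbClassifiers} for geometricity, exactly as you do. Your explicit attention to the strictness adjustments via Proposition~\ref{prop:extReindex} and Lemma~\ref{lem:fMH} is the bookkeeping the paper leaves implicit.
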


As we have already mentioned, by taking $\thT_0 = \thone$ we get that the right hand $P$
is also locally representable.

\begin{example}\label{ex:geomPres}
  Let $\thT_0$ be the context whose models are ``GRD-systems'' as in~\cite{PPExp}.
  It has three nodes $G,R,D$, together with (amongst other ingredients)
  a further node $\fin G$ constrained to be the Kuratowski finite powerset of $G$.
  (For instance, it can be constructed as a quotient of the list object $\List G$.)
  Finally, it has edges
  \[
    \xymatrix{
      & {D}
        \ar@{->}[dl]_{\rho}
        \ar@{->}[d]^{\pi}
      \\
      {\fin G}
      & {R}
        \ar@{->}[l]^{\lambda}
    }
  \]
  This can be used to present a locale, with generators $g\in G$ subject to relations (for $r\in R$)
  \[
    \bigwedge \lambda(r) \leq \bigvee \{ \bigwedge \rho(d) \mid \pi(d) = r \}
    \text{.}
  \]

  The points of the locale, the subsets $F \subseteq G$ respecting the relations,
  are models of a context $\thT_1$ that extends $\thT_0$.
  It has a node for $F$, with an edge $F\to G$ constrained to be monic,
  nodes for $X = \{ r\in R \mid \lambda(r) \subseteq F \}$
  and $Y = \{ r \in R \mid (\exists d) (\pi(d) = r \wedge \rho(d) \subseteq F) \}$
  (which can be constructed in the AU-sketches)
  and an edge $X \subseteq Y$.

  Then the local representability Theorem~\ref{thm:locRep} implies~\cite[Corollary~5.4]{PPExp},
  the geometricity of presentations.
\end{example}

\section{Conclusion}\label{sec:Conc}
What we have done here is to elaborate the idea that a map $U\colon\thT_1\to\thT_0$,
a $\thT_0$-valued map on $\thT_1$,
may also be a \emph{bundle:}
that is to say, a \emph{space}-valued map on the \emph{co}domain $\thT_0$,
transforming points to the corresponding fibres.

This interpretation is often tacit in a morphism in a category,
and is particularly important in type theory.
We have made it concrete in the particular case of a morphism $U$ in $\Con$
that arises from a context extension.

Note that $U$ certainly is a ``$\thT_0$-valued map on $\thT_1$'',
if we think of the points of a context as its strict models.
This is shown in Section~\ref{sec:IndCatMod} and does not need toposes --
the models can be taken in any AU.

To get $U$ as a bundle, we interpret ``space'' as Grothendieck topos
and look for the classifying toposes for the fibres.
However, the base toposes are now allowed to vary,
and in Theorem~\ref{thm:pspbClassifiers} we showed the geometricity property
that when you change the base,
and the corresponding base point of $\thT_0$,
the classifier (representing the fibre) transforms by pseudopullback.
This result, which I have not been able to find in the literature,
relies on a difference between the ``arithmetic'' theories of $\Con$
and the geometric theories that are classified.
An arithmetic theory depends only on the existence of an nno,
whereas a geometric theory depends on the choice of some base topos $\baseS$.

To avoid the intricacies of coherence for the choices made in indexed categories,
we have adopted a fibrational approach to classifiers.

The results here are a piece in the broad programme of using AU techniques
to prove base-independent, geometric results for toposes in those situations that
do not need the full power of $\baseS$-indexed colimits for some $\baseS$.
One already mentioned is the ``geometricity of presentations'', Example~\ref{ex:geomPres}.

On the other hand, the results also provide clues to how one might seek a self-standing arithmetic logic
of spaces, developing~\cite{ArithInd}.
They suggest that the extension maps might be the correct analogues of bounded geometric morphisms.

\section*{Acknowledgements}
I am grateful to the University of Birmingham for a period of study leave that enabled me
to bring this paper to completion;
also to Thomas Streicher for various helpful discussions that clarified my understanding of fibrations.

\bibliographystyle{amsalpha}
\bibliography{../bibtex/MyBiblio}

\end{document}